\newcommand{\tw}{\tilde{\xi}}  
\newcommand{\ii}{\mathbf{I}\,}  
\newcommand{\p}{\partial}
\newcommand{\Z}{\mathbb Z}
\renewcommand{\P}{\mathcal P}
\newcommand{\D}{\mathcal D}
\newcommand{\R}{\mathbb R}
\renewcommand{\phi}{\varphi}
\newcommand{\Tr}{\operatorname{Tr}\,}
\newcommand{\ind}{\operatorname{ind}}
\newcommand{\spin}{\,\operatorname{spin}}
\newcommand{\vol}{\operatorname{vol}}
\newcommand{\zp}{Z_{\infty}}
\newcommand{\conn}{\mathbin{\#}} 
\newcommand{\spinc}{\ifmmode{\operatorname{Spin}^c}\else{$\operatorname{spin}^c$\ }\fi}
\newcommand{\diff}{\operatorname{Diff}}          
\newcommand\rep{\alpha}   
\newcommand{\mm}{\mathfrak M}
\newcommand{\mmp}{\mm^+}
\newcommand\ddfrac[2]{\frac{\displaystyle #1}{\displaystyle #2}}
\newtheorem{theorem}{Theorem}[section]
\newtheorem{lemma}[theorem]{Lemma}
\newtheorem{proposition}[theorem]{Proposition}
\newtheorem{thm}{Theorem}
\theoremstyle{definition}
\newtheorem{remark}[theorem]{Remark}
\title{On positive scalar curvature cobordisms and the conformal Laplacian on end-periodic manifolds}
\thanks{The first author was partially supported by NSF Grant DMS-1547145, the second author was partially supported by NSF Grants DMS-1506328 and DMS-1811111, and the third author was partially supported by a grant from the Simons Foundation \#\,426269.}
\author[Demetre Kazaras]{Demetre Kazaras}
\address{Department of Mathematics\newline\indent Stony Brook University, \newline\indent Stony Brook, NY 11794}
\email{\rm{demetre.kazaras@stonybrook.edu}}
\author[Daniel Ruberman]{Daniel Ruberman}
\address{Department of Mathematics, MS 050\newline\indent Brandeis
University \newline\indent Waltham, MA 02454}
\email{\rm{ruberman@brandeis.edu}}
\author[Nikolai Saveliev]{Nikolai Saveliev}
\address{Department of Mathematics\newline\indent
University of Miami \newline\indent PO Box 249085
\newline\indent Coral Gables, FL 33124}
\email{\rm{saveliev@math.miami.edu}}
\subjclass[2000]{53C21, 53C27, 58J05, 58J20, 58J28}
\begin{document}
\begin{abstract}
We show that the periodic $\eta$-invariant of Mrowka, Ruberman and Saveliev provides an obstruction to the existence of cobordisms with positive scalar curvature metrics between  manifolds of dimensions $4$ and $6$. Our proof combines the end-periodic index theorem with a relative version of the Schoen--Yau minimal surface technique. As a result, we show that the bordism groups $\Omega^{\spin,+}_{n+1}(S^1 \times BG)$ are infinite for any non-trivial group $G$ which is the fundamental group of a spin spherical space form of dimension $n=3$ or $5$.\end{abstract}

\maketitle

\section{Introduction}
A classic problem in global differential geometry is to determine when a given manifold admits a metric of positive scalar curvature (`psc') and if so, to say something about the classification of such metrics. Fundamental tools for this study make use of index theory \cite{lichnerowicz:spinors} and minimal submanifolds~\cite{SY79}, with many extensions of these methods over the years; see for instance the surveys~\cite{rosenberg:psc-progress,schick:psc-survey}.  Because the space $\P(X)$ of psc metrics on a manifold $X$ is (if non-empty) infinite dimensional, classification {\em per se} is not reasonable, and one considers such metrics up to deformation, or isotopy. This leads to the study of the homotopy groups of $\P(X)$ and of the associated moduli space $\mmp(X) = \P(X)/\diff(X)$, and index theoretic methods show that these homotopy groups can be non-trivial~\cite{hitchin:spinors}.

Botvinnik and Gilkey~\cite{botvinnik-gilkey:bordism,botvinnik-gilkey:spaceforms} used the Atiyah--Patodi--Singer index theorem~\cite{aps:II} to show that, for any odd-dimensional non-simply connected spherical space form $Y$ of dimension at least $5$, the moduli space $\mmp(Y)$ has infinitely many path components. They further strengthened this result by introducing a psc spin bordism group $\Omega^{\spin,+}_n (BG)$, whose definition will be recalled in Section \ref{S:bordisms}. They showed that the group $\Omega^{\spin,+}_n (BG)$ is infinite for every odd $n \ge 5$ and $G$ the fundamental group of a non-simply connected spherical space form of dimension $n$. They also extended these results to 
some non-orientable even-dimensional manifolds. 

The second and third authors, together with T.~Mrowka, extended~\cite{MRS3} the Atiyah--Patodi--Singer theorem to the setting of even-dimensional manifolds with periodic ends. As a consequence, the {\em isotopy} results of~\cite{botvinnik-gilkey:bordism,botvinnik-gilkey:spaceforms} continue to hold for many even dimensional orientable manifolds, for instance, the product of a spherical space form with a circle.  (The results in dimension $4$ are somewhat more limited, because~\cite{marques:psc} psc metrics are unique up to isotopy in dimension $3$). It is natural to ask if these results on classification up to isotopy actually hold up to bordism. 

In this paper, we combine the end-periodic index theory~\cite{MRS3} with the relative version of the Schoen--Yau minimal surface technique due to Botvinnik--Kazaras~\cite{BK} to show that the Botvinnik--Gilkey {\em bordism} results continue to hold for orientable manifolds in low even dimensions. The statement of our result will include a group-theoretic constant $r_n(G)$ defined in~\cite{botvinnik-gilkey:spaceforms} via the representation theory of a finite group $G$. 

\begin{thm}\label{T:one}
Let $n = 4$ or $6$. Let $G$ be a finite group and set $\Gamma=\mathbb{Z}\times G$.  When $n=4$, additionally assume that $G$ is the fundamental group of a non-simply connected $3$-dimensional spherical space form, while if $n=6$ we assume that $r_5(G) > 0$. Then $\Omega^{\spin,+}_n (B\Gamma)$ contains infinitely many elements, represented by maps $f:M\to B\Gamma$ where $M$ are connected manifolds with $\pi_1(M) \cong \Gamma$ that support psc metrics. 
\end{thm}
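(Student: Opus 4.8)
The plan is to construct an additive invariant $\rho\colon\Omega^{\spin,+}_n(B\Gamma)\to\R$ and exhibit a single class on which it is nonzero; since the bordism group is abelian and $\rho$ is a homomorphism, the integer multiples of that class are then pairwise distinct, giving infinitely many elements. The model class is $x=[\,Y\times S^1,\ g_Y+d\theta^2,\ \iota\,]$, where $Y=S^{n-1}/G$ is the spherical space form in the hypothesis with its round psc metric $g_Y$, and $\iota\colon Y\times S^1\to BG\times S^1=B\Gamma$ is the classifying map of the universal cover times the identity. This is a connected psc spin manifold with $\pi_1\cong\Gamma$. For $k\ge 1$, a representative of $k\cdot x$ is obtained from $k$ disjoint copies of $Y\times S^1$ by standard Gromov--Lawson surgery: first $k-1$ connected sums, which are surgeries in codimension $n\ge 4$, and then surgeries on embedded circles lying in the kernel of the ``fold'' homomorphism $\Gamma^{\ast k}\to\Gamma$, which are surgeries in codimension $n-1\ge 3$; all of these preserve the scalar curvature sign and the class in $\Omega^{\spin,+}_n(B\Gamma)$ (the surgered circles are null-homotopic in the aspherical space $B\Gamma$), and the result is connected with $\pi_1\cong\Gamma$.

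The invariant $\rho$ is built from the periodic $\eta$--invariant of \cite{MRS3}. Given $(M^n,g,f)$ with $g$ psc, let $\alpha=f^{*}[S^1]\in H^1(M;\Z)$ and, for a unitary representation $\beta$ of $G$ pulled back along $f$, let $E_\beta\to M$ be the associated flat Hermitian bundle. Because $M$ is compact with psc metric, the twisted Dirac operator is invertible on the infinite cyclic cover of $M$ determined by $\alpha$ --- the Lichnerowicz formula applies verbatim, the flat twist contributing no curvature term --- so by \cite{MRS3} the end-periodic Dirac operator coupled to $E_\beta$ is Fredholm and has a well-defined periodic $\eta$--invariant $\eta_{\mathrm{ep}}(M,g,\alpha,E_\beta)$. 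Set
\[
\rho(M,g,f)=\eta_{\mathrm{ep}}(M,g,\alpha,E_\beta)-(\dim\beta)\,\eta_{\mathrm{ep}}(M,g,\alpha,\C).
\]
The crux is that $\rho$ descends to $\Omega^{\spin,+}_n(B\Gamma)$. Given a psc spin bordism $(W^{n+1},h,F)$ over $B\Gamma$, one applies the end-periodic index theorem of \cite{MRS3} with the end-periodic structure determined by $F^{*}[S^1]$: the difference of the two boundary periodic $\eta$--invariants is expressed as an $L^2$--index plus the integral of $\ahat(W)\wedge(\ch E_\beta-\dim\beta)$. The index vanishes, the relevant operator being invertible by the Lichnerowicz estimate on the psc bordism, and the interior integrand vanishes identically because $E_\beta$ is flat, so $\rho$ takes the same value on the two ends. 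This is exactly where the relative Schoen--Yau technique of \cite{BK} enters: it is used to put $W$ into a form adapted to the end-periodic index theorem --- concretely, to replace a level set of $W\to S^1$ by a psc minimal hypersurface and thereby match the even-dimensional invariant with data in dimension $n-1$ --- and it is here that the dimension restriction is forced, since minimal hypersurfaces in $W^{n+1}$ are smooth only for $n+1\le 7$, with $n=6$ the borderline case.

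It remains to compute $\rho$ on the model class, and here the periodic machinery degenerates to the classical one. On $Y\times S^1$ with the product metric, the end-periodic structure coming from the $S^1$ factor is an honest metric cylinder, so $\eta_{\mathrm{ep}}(Y\times S^1,\dots)$ collapses to the Atiyah--Patodi--Singer $\eta$--invariant of $(Y,g_Y)$, and $\rho(x)$ becomes the relative $\eta$--invariant of the space form $Y$ twisted by $\beta$. By the representation-theoretic computation of Botvinnik--Gilkey \cite{botvinnik-gilkey:spaceforms} there is a choice of $\beta$ making this number nonzero precisely under the stated hypotheses: $r_5(G)>0$ when $n=6$, and $G$ the fundamental group of a non-simply connected $3$-dimensional spherical space form when $n=4$ (the $3$-dimensional case being handled directly, since the Botvinnik--Gilkey results in odd dimensions are stated for dimension at least $5$). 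Thus $\rho(x)\ne 0$, and the classes $k\cdot x$, realized by connected manifolds with $\pi_1\cong\Gamma$ as above, furnish infinitely many elements of $\Omega^{\spin,+}_n(B\Gamma)$.

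The main obstacle is the bordism invariance of $\rho$: marrying the end-periodic index theorem of \cite{MRS3} with the relative minimal surface technique of \cite{BK} so that, over the $(n+1)$--dimensional psc bordism $W$, the interior contribution is recognized as a flat Chern character (hence zero) and the boundary contributions as the two values of $\rho$, all while controlling the geometry and regularity of the minimal hypersurface inside $W$ --- the reason the argument is confined to $n=4,6$.
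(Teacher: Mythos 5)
Your overall strategy---take a single class $x$ with $\tw_\alpha(x)\neq 0$, realize its multiples $kx$ by connected representatives via Gromov--Lawson surgery, and separate them using the bordism invariance of $\tw_\alpha$ (which is exactly Theorem \ref{T:two}, so you may cite it) together with additivity under disjoint union---is a legitimate and genuinely different route from the paper's. It is essentially sound for $n=4$. However, there is a concrete gap in the $n=6$ case: the hypothesis there is only $r_5(G)>0$, \emph{not} that $G$ acts freely on $S^5$. Your model class is built from ``the spherical space form $S^{n-1}/G$ in the hypothesis,'' but for $n=6$ no such space form need exist (for instance $G=\Z/3\times\Z/3$ has odd order, hence $r_5(G)>0$ by the remark after Theorem \ref{T:one}, yet admits no free action on any sphere since it is not periodic). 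The fix is the one the paper uses: invoke \cite[Theorem 0.1]{botvinnik-gilkey:spaceforms}, which for any $G$ with $r_5(G)>0$ produces a closed connected spin $5$-manifold $Y$ with $\pi_1(Y)\cong G$ carrying psc metrics with pairwise distinct (in particular, at least one nonzero) relative eta invariants; take $x=[Y\times S^1]$ for such a $Y$ and metric. A smaller presentational point: $\tw_\alpha$ is only defined for a \emph{primitive} class in $H^1$, so $\rho$ is not literally a homomorphism on all of $\Omega^{\spin,+}_n(B\Gamma)$; your argument survives because every class $kx$ carries the primitive class pulled back from $B\Z$, and any psc bordism between representatives of $kx$ and $lx$ over $B\Gamma$ automatically extends both the class and the representation, which is all Theorem \ref{T:two} requires.

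For comparison: in dimension $6$ the paper does not take multiples of one class but instead crosses the infinitely many Botvinnik--Gilkey metrics $g_i$ on a single $Y^5$ with $S^1$ and distinguishes $\tw_\alpha(S^1\times Y, dt^2+g_i)$; this buys the stronger conclusion, noted after Theorem \ref{T:one}, that all the non-bordant classes live on the \emph{same} $6$-manifold, which your construction does not give. In dimension $4$ the paper's argument is the elaborate stabilization of \cite[Section 9]{MRS3}, producing psc metrics on $(S^1\times S^3/G)\conn m_N\cdot(S^2\times S^2)$ with unboundedly many $\tw_\alpha$ values; your multiples-of-one-class argument, resting on $\tw_\alpha(S^3/G,\text{round})\neq 0$ (which is \cite[Lemma 9.7]{MRS3}, so you should cite it rather than wave at a ``direct'' computation), is arguably more elementary, at the cost of producing representatives ($\conn_k(S^1\times S^3/G)$ surgered to have $\pi_1\cong\Gamma$) that again vary with $k$. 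Finally, your sketch of why $\tw_\alpha$ is a psc-bordism invariant garbles the actual mechanism (there is no interior $\ahat\wedge(\ch E_\beta-\dim\beta)$ term to kill; in the paper the local index contributions cancel in the difference $k\cdot\ind\D^+-\ind\D^+_\alpha$, and both indices vanish by Lichnerowicz), but since that is the content of Theorem \ref{T:two}, which you are entitled to use as a black box, this does not affect the logic.
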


Following the remark after~\cite[Theorem 0.1]{botvinnik-gilkey:spaceforms}, $r_5(G) > 0$ if $G$ contains an element $g$ which is not conjugate to $g^{-1}$. This holds for example in any odd order group.

It is worth noting that the distinct bordism classes in $\Omega^{\spin,+}_6 (B\Gamma)$ are obtained by constructing different psc metrics on the same underlying smooth manifold. In dimension $4$, the conclusion is a bit weaker: given an integer $N$, there is a $4$-manifold that supports at least $N$ bordism classes of psc metrics. The restriction to relatively low dimensions in Theorem \ref{T:one} has to do with possible singularities in an area-minimizing hypersurface in dimensions greater than $7$; it is conceivable that this restriction could be removed using techniques recently developed in~\cite{lohkamp:cones,schoen-yau:higherdim}. Theorem \ref{T:one} recovers an earlier result of Leichtnam--Piazza \cite[Theorem 0.1]{LeichtnamPiazza:eta} in the case of $n=6$, but is new for $n=4$. The approach in \cite{LeichtnamPiazza:eta} utilizes the higher eta-invariants, see \cite{Lott:eta}, and appears quite different from ours.

Theorem \ref {T:one} is a consequence of the following result about the periodic $\tw$--invariants which were introduced in \cite{MRS3}.

\begin{thm}\label{T:two}
Let $X_0$ and $X_1$ be closed oriented Riemannian spin manifolds of dimension $n = 4$ or $6$ with positive scalar curvature and a choice of  primitive cohomology classes $\gamma_0 \in H^1 (X_0;\Z)$ and $\gamma_1 \in H^1 (X_1;\Z)$ and unitary representations $\alpha_0: \pi_1 (X_0) \to U(k)$ and $\alpha_1: \pi_1 (X_1) \to U(k)$. Suppose that $X_0$ is bordant to $X_1$ via a positive scalar curvature cobordism and that both the cohomology classes $\gamma_0$, $\gamma_1$ and the representations $\alpha_0$ and $\alpha_1$ extend to this cobordism. Then 
\[
\tw_{\alpha_0} (X_0,\D^+) = \tw_{\alpha_1} (X_1,\D^+).
\]
\end{thm}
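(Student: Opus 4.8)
The plan is to construct an end-periodic spin manifold out of the cobordism and apply the end-periodic index theorem from \cite{MRS3} together with the positive scalar curvature hypothesis to force the index to vanish. Let $W$ be the psc cobordism from $X_0$ to $X_1$, and let $\alpha\colon\pi_1(W)\to U(k)$ and $\gamma\in H^1(W;\Z)$ be the extensions of the given data. First I would use $\gamma$ to build a semi-infinite periodic end: cut $W$ along a hypersurface dual to $\gamma$ to obtain a compact piece $Z$ with two boundary components identified under a deck-type translation, stack infinitely many copies of $Z$ to form a periodic end $Z_\infty$, and glue this to compact collars of $X_0$ and $X_1$ to produce a manifold $\tilde X$ with one end modeled on $X_0\times[0,\infty)$ (or a cylinder) and one periodic end. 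Twisting the spin Dirac operator $\D^+$ by the flat bundle associated to $\alpha$ gives an operator $\D^+_\alpha$ on $\tilde X$.

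The next step is to invoke the end-periodic index theorem of \cite{MRS3}: the twisted Dirac operator $\D^+_\alpha$ on $\tilde X$ is Fredholm precisely when the model operator on the periodic end is invertible, and in that case its index is expressed as an integral of the local index density over $\tilde X$ minus the periodic $\tw$-invariant of the periodic end plus the Atiyah--Patodi--Singer $\eta$-type contribution from the cylindrical end at $X_0$. Here the psc hypothesis does the essential work in three ways: by the Lichnerowicz--Weitzenböck formula, positive scalar curvature on $X_0$, $X_1$ and on all of $W$ makes the relevant Dirac operators on the closed cross-sections and on the periodic end invertible (so Fredholmness holds and the $\tw$-invariants $\tw_{\alpha_0}(X_0,\D^+)$ and $\tw_{\alpha_1}(X_1,\D^+)$ are defined), forces the kernel and cokernel of $\D^+_\alpha$ on $\tilde X$ to vanish so that $\ind \D^+_\alpha = 0$, and makes the interior integrand vanish pointwise, so the local term drops out of the index formula. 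What remains is an identity of the form $0 = -\,\tw_{\alpha_1}(X_1,\D^+) + \tw_{\alpha_0}(X_0,\D^+)$, which is the desired equality. (One must be slightly careful with orientations and with which manifold sits at the cylindrical versus the periodic end; swapping the roles of $X_0$ and $X_1$, or reversing $\gamma$, accounts for the signs.)

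The main obstacle I anticipate is not the index-theoretic bookkeeping but making the geometric construction compatible with the hypotheses: one needs the chosen hypersurface dual to $\gamma$ in $W$ to have psc, or at least to arrange a psc metric on $Z$ that is a product near the gluing region and restricts to the given psc metrics near $X_0$ and $X_1$, so that the infinite stacking $Z_\infty$ carries a genuine psc metric with product structure along the end. This is exactly where the relative Schoen--Yau / Botvinnik--Kazaras minimal surface technique \cite{BK} enters: it should let me replace $W$ (if necessary) by a psc cobordism for which the Poincaré dual of $\gamma$ is realized by a psc hypersurface, or produce the required psc structure on the periodic end after a controlled surgery. A secondary technical point is checking that the periodic $\tw$-invariant as defined in \cite{MRS3} is independent of the auxiliary choices (the collar length, the precise metric interpolation on $Z$) so that the two sides of the claimed identity are the intrinsic invariants of $(X_0,\gamma_0,\alpha_0)$ and $(X_1,\gamma_1,\alpha_1)$; this should follow from a standard deformation/excision argument since all the metrics involved stay psc along the way.
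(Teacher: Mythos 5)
Your overall strategy---build an end-periodic manifold from the cobordism, apply the index theorem of \cite{MRS3}, and use positive scalar curvature to kill the index, with \cite{BK} supplying psc on the compact middle piece---is the paper's strategy in outline, but the geometric construction you describe is not the right one and would not produce the invariants appearing in Theorem \ref{T:two}. The periodic invariant $\tw_{\alpha_i}(X_i,\D^+)$ depends on the pair $(X_i,\gamma_i)$ and is recovered by the index theorem only from an end modeled on the infinite cyclic cover $\widetilde X_i$; a cylindrical end $X_0\times[0,\infty)$ would instead contribute the ordinary Atiyah--Patodi--Singer eta invariant of $X_0$. Moreover, cutting the cobordism $W$ along a hypersurface dual to $\gamma$ and stacking copies of the cut piece does not yield an end-periodic manifold whose end is modeled on the cyclic cover of a closed manifold, so the theorem of \cite{MRS3} does not apply to it. The correct construction takes Schoen--Yau minimal hypersurfaces $Y_i\subset X_i$ dual to $\gamma_i$, uses \cite[Theorem 5]{BK} (applied to the psc bordism over $S^1$) to get a psc spin cobordism $Z$ from $Y_0$ to $Y_1$, and forms $\zp=\widetilde X_0^-\cup Z\cup\widetilde X_1^+$ with \emph{two} periodic ends. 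Your index bookkeeping is also off: psc does not make the interior index density vanish pointwise (the $\widehat A$-form of a psc metric is not zero), and applying the index theorem to the single operator $\D^+_\alpha$ would leave interior and transgression terms as well as the full twisted eta, not $\tw_\alpha$. The mechanism is the difference $k\cdot\ind\D^+(\zp)-\ind\D^+_\alpha(\zp)$, in which the local terms cancel because the twisting bundle is flat (Proposition \ref{P:one}); psc then kills both indices by Lichnerowicz.

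The more serious gap is the analytic step you wave away with ``a standard deformation/excision argument since all the metrics involved stay psc along the way.'' They do not. The hypersurfaces $Y_i$ carry psc metrics $g_{Y_i}$ only \emph{conformal} to $g_i|_{Y_i}$, and the BK metric on $Z$ is a product near $Y_i$ with respect to $g_{Y_i}$; gluing it to the lifted metrics on $\widetilde X_i^{\mp}$ therefore requires a transition region (via \cite{AB}) whose scalar curvature may be negative. Repairing this is the main technical content of the paper: Lemma \ref{lem:main} produces, by a barrier-method argument for the conformal Laplacian on the noncompact manifold $\zp$, a conformal factor $u$ with $|u-1|\le Ce^{-Bf}$ such that $u^{4/(n-2)}g_\infty$ has uniformly positive scalar curvature, and Proposition \ref{P:two} shows that the index identity survives such a bounded conformal change of an end-periodic metric. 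Without these two ingredients your argument has no uniformly psc metric on $\zp$ (conformal to an end-periodic one), so neither the Fredholmness and vanishing of the indices nor the applicability of the index formula is justified.
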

We prove Theorem \ref{T:two} by applying the index theorem of \cite{MRS3} to an end-periodic manifold $Z_\infty$ constructed from the psc-cobordism between $X_0$ and $X_1$. The manifold $Z_{\infty}$ has two periodic ends modeled on the infinite cyclic covers of $X_0$ and $X_1$ with metrics conformally equivalent to end-periodic metrics. The `middle portion' of $Z_\infty$ is, roughly speaking, a minimal hypersurface with free boundary as constructed in Botvinnik--Kazaras \cite{BK}; this is the crucial geometric ingredient of the proof. Though this middle portion comes equipped with a psc metric, it does not smoothly glue to the given metrics on the covers of $X_0$ and $X_1$. To produce a smooth metric on $Z_\infty$, we introduce a transition region which may initially have negative scalar curvature. The main analytic result of the paper, Lemma \ref{lem:main}, conformally changes this initial metric on $Z_\infty$ to one of positive scalar curvature, without dramatically disturbing it far away from the transition region.

The paper is organized as follows. In Section \ref{S:periodic},  we recall the definition of the invariants $\tw_{\alpha}$ in all even dimensions, together with the formula of \cite{MRS3} expressing $\tw_{\alpha}$ in terms of the Dirac index on end-periodic manifolds. We further extend this formula to a class of end-periodic manifolds with metrics which are conformally equivalent to end-periodic metrics but are not end-periodic themselves. Theorem \ref{T:two} is proved in Section \ref{S:bordisms} assuming Lemma   \ref{lem:main}. The proof of Lemma \ref{lem:main} then occupies the entire Section \ref{S:laplace}. Finally, Theorem \ref{T:one} is proved in Section \ref{S:reps}.\\[2ex]
{\bf Acknowledgments:} The broad outline of this project, encompassing both \cite{BK} and the present paper, took shape during conversations with Boris Botvinnik during the 2015 PIMS Symposium on Geometry and Topology of Manifolds. We thank Boris for his role in the project, and the organizers of the PIMS Symposium for providing a stimulating environment. We are also grateful to the anonymous referees whose comments helped improve the paper.


\section{Periodic $\tw$--invariants}\label{S:periodic}
Let $X$ be a compact even-dimensional spin manifold with a choice of a primitive cohomology class $\gamma \in H^1 (X;\Z)$ and a Riemannian metric $g$ of positive scalar curvature. Associated with this data is the spin Dirac operator $\D^+ = \D^+(X,g)$ and, given a  representation $\rep: \pi_1 (X) \to U(k)$, the twisted Dirac operator $\D^+_{\rep} = \D^+_{\rep}(X,g)$. The periodic $\tw$--invariant was defined in \cite[Section 8.1]{MRS3} by the formula
\[
\tw_{\rep}(X, \D^+)\;=\;\frac 1 2 \left(\eta (X,\D^+_{\rep}) - k\cdot \eta (X,\D^+)\right)
\]
using the periodic $\eta$--invariants of \cite{MRS3}. Since the metric $g$ has positive scalar curvature, the twisted Dirac operators $\D^{\pm}_z = \D^{\pm} - \ln z\cdot df$ have zero kernels on the unit circle $|z| = 1$, and 

\[
\eta (X,\D^+)\;=\;\frac 1 {\pi i}\,\int_0^{\infty} \oint_{|z| = 1} \Tr\,\left(df\cdot \D^+_z e^{-t\, \D^-_z \D^+_z}\right)\,\frac {dz} {z}\;dt.
\]

\medskip\noindent
The definition of $\eta (X,\D^+_{\rep})$ is similar. Of most importance to us is the fact that the periodic $\tw$--invariant can be expressed in index theoretic terms, which is done as follows. 

Let us consider an end-periodic manifold with the end modeled on the infinite cyclic cover $\widetilde X \to X$ corresponding to $\gamma$. To be precise, let $Y\subset X$ be a hypersurface Poincar\'e dual to $\gamma$, and let $W$ be the cobordism from $Y$ to itself obtained by cutting $X$ open along $Y$. We will assume that $Y$ is spin bordant to zero, since this is the only case relevant to this paper, and choose a spin null-cobordism $Z$. The end-periodic manifold in question is then of the form 
\begin{equation}\label{E:zp}
\zp\, =\, Z\, \cup\, W\, \cup\, W\, \cup \ldots
\end{equation}
where we write $\partial W = Y^- \cup\, Y^+$ and identify each $Y^+$ with $Y^-$ in the subsequent copy of $W$, and also identify the boundary of $Z$ with $Y^-$ in the first copy of $W$. Let $f: \zp \to \R$ be any smooth function with the property that $f(\tau(x)) = f(x) + 1$ over the end of $\zp$, where $\tau$ stands for the covering translation. 

Let $g_{\infty}$ be any metric on $\zp$ which matches over the periodic end the lift of the metric $g$ from $X$ to its infinite cyclic cover. Denote by $g_Z$ the induced metric on $Z$. The positive scalar curvature condition then ensures that the Dirac operators $\D^+(\zp,g_\infty)$ and $\D_{\alpha}^+(\zp,g_\infty)$ are uniformly invertible at infinity in the sense of Gromov and Lawson \cite{gromov-lawson:psc} and, in particular, their $L^2$ closures are Fredholm. Of course, the operator $\D_{\alpha}^+(\zp,g_\infty)$ is only well defined if the pull-back of $\alpha$ to $\pi_1(Y)$ extends to a representation of $\pi_1 (Z)$, which we will assume from now on. 

\begin{proposition}\label{P:one}
Let $\zp$ be an end-periodic manifold whose end is modeled on the infinite cyclic cover $\widetilde X$. Then 
\begin{equation}\label{E:eq1}
\tw_{\rep}(X, \D^+)\, =\, k\cdot \ind \D^+ (\zp,g_\infty)\, -\, \ind \D^+_{\alpha} (\zp,g_\infty).
\end{equation}
\end{proposition}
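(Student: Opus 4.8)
The plan is to obtain this as a direct application of the end-periodic index theorem of \cite{MRS3}. By hypothesis $g_\infty$ agrees over the periodic end with the lift of the positive scalar curvature metric $g$, so the Dirac operators $\D^+(\zp,g_\infty)$ and $\D^+_{\rep}(\zp,g_\infty)$ are uniformly invertible at infinity in the sense of Gromov--Lawson; their $L^2$-closures are therefore Fredholm, and the \cite{MRS3} index theorem applies to each. That theorem expresses the index of an end-periodic Dirac-type operator as a local term---a Chern--Weil integral built from the Riemannian curvature and the connection on the twisting bundle, comprising an $\ahat$-type integral over the null-cobordism $Z$ together with whatever regularized contribution over the periodic end the theorem prescribes---minus one-half of the periodic $\eta$-invariant of the corresponding operator on the model end $X$. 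With $f\colon\zp\to\R$ chosen compatibly with $\gamma$ (so that $f(\tau x)=f(x)+1$ over the end, matching the $df$ in the heat-kernel formula for $\eta$), these periodic $\eta$-invariants are precisely $\eta(X,\D^+)$ and $\eta(X,\D^+_{\rep})$, the same objects entering the definition of $\tw_{\rep}(X,\D^+)$.

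The point that makes the proof work is that $\rep$ is a \emph{flat} unitary representation. The existence of $\D^+_{\rep}(\zp,g_\infty)$ uses exactly the standing assumption that $\rep|_{\pi_1(Y)}$ extends over $\pi_1(Z)$; one equips the associated bundle $E_{\rep}\to\zp$ with its canonical flat connection, which is $\tau$-invariant, so $\D^+_{\rep}(\zp,g_\infty)$ is again end-periodic and the index theorem applies to it as well. Since the Chern--Weil forms of a flat connection vanish identically, the local index density for $\D^+_{\rep}$ computed with this connection equals, pointwise and everywhere on $\zp$---over $Z$ and over the periodic end alike---exactly $k$ times the local index density for $\D^+$. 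Hence the entire local term in the index formula for $\D^+_{\rep}$, regularization included, is $k$ times that for $\D^+$.

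Forming $k\cdot\ind\D^+(\zp,g_\infty)-\ind\D^+_{\rep}(\zp,g_\infty)$, the local terms cancel and one is left with
\[
-\tfrac k2\,\eta(X,\D^+)\;+\;\tfrac12\,\eta(X,\D^+_{\rep})\;=\;\tfrac12\bigl(\eta(X,\D^+_{\rep})-k\cdot\eta(X,\D^+)\bigr)\;=\;\tw_{\rep}(X,\D^+),
\]
which is \eqref{E:eq1}. (The sign matches the standard APS-type normalization $\ind=\text{local term}-\tfrac12\eta$.) As a byproduct this shows that the right-hand side of \eqref{E:eq1} is independent of the auxiliary choices of $Z$, $f$, and $g_\infty$: deforming $g_\infty$ within the class of metrics that are end-periodic and positive-scalar-curvature at infinity leaves both Fredholm indices unchanged, while replacing $Z$ by another null-cobordism $Z'$ changes $\ind\D^+$ and $\ind\D^+_{\rep}$ by the Atiyah--Singer indices over the closed manifold $Z\cup_Y(-Z')$, and the latter two again differ by a factor of $k$ since $\ch(E_{\rep})=k$ in de Rham cohomology, so they cancel in the combination $k(\cdot)-(\cdot)_{\rep}$.

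The step demanding the most care is the bookkeeping against \cite{MRS3}: one must confirm that the $\eta$-invariant appearing in their index theorem is, with the correct sign and normalization, the periodic $\eta$-invariant of the definition of $\tw_{\rep}$; that the local term genuinely scales by $k$ including its regularized tail over the non-compact end, so that the flat-connection computation is legitimate in the end-periodic rather than merely the closed setting; and that the orientations of $\zp$ and $X$ and the choice of $f$ are mutually consistent. These are matters of aligning conventions rather than new analysis, but they are exactly where a hasty argument would slip a sign or a factor.
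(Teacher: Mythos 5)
Your proposal is correct and follows essentially the same route as the paper: apply the end-periodic index theorem of \cite{MRS3} to both $\D^+(\zp,g_\infty)$ and $\D^+_{\alpha}(\zp,g_\infty)$, use flatness of $V_\alpha$ to conclude that the local index form and its transgression for the twisted operator are $k$ times those for the untwisted one (the paper writes this as $\ii(\D^+_{\alpha}(Z,g_Z)) = k\cdot\ii(\D^+(Z,g_Z))$ and $\omega_\alpha = k\,\omega$), and subtract so that only the $\eta$-terms survive, giving $\tw_{\rep}(X,\D^+)$. The extra remarks on choice-independence and sign conventions are consistent with, though not needed for, the paper's argument.
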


\begin{proof}
Apply the index theorem \cite[Theorem A]{MRS3} to the Dirac operators $\D^+ (\zp,g_\infty)$ and $\D^+_{\alpha} (\zp,g_\infty)$ to obtain
\smallskip
\[
\ind \D^+ (\zp,g_\infty)\, =\, \int_Z \ii (\D^+(Z,g_Z)) - \int_Y\omega\, + \int_X df \wedge\omega\; -\, \frac 1 2\,\eta(X, \D^+),
\]
\[
\ind \D^+_{\alpha} (\zp,g_\infty) = \int_Z \ii (\D^+_{\alpha}(Z,g_Z)) - \int_Y\omega_{\alpha} + \int_X df \wedge\omega_{\alpha} - \frac 1 2\,\eta(X, \D^+_{\alpha}),
\]

\medskip\noindent
where $\ii (\D^+ (Z,g_Z)) = \widehat A\, (Z,g_Z)$ and $\ii (\D_{\alpha}^+(Z,g_Z)) = \widehat A\, (Z,g_Z)\operatorname{ch} (V_{\alpha})$ are the local index forms, and $\omega$ and $\omega_{\alpha}$ are transgressed classes such that $d\omega = \ii (\D^+ (X,g))$ and $d\omega_{\alpha} = \ii (\D_{\alpha}^+ (X,g))$. The local index forms are related by $\ii (\D^+_{\alpha} (Z,g_Z)) = k \cdot \ii (\D^+(Z,g_Z))$ hence the transgressed classes can be chosen so that $\omega_{\alpha} = k\cdot \omega$. Now, subtracting $k$ copies of the first formula from the second gives the desired result. 
\end{proof}

We now wish to prove that a formula similar to \eqref{E:eq1} holds as well for certain metrics on $\zp$ which are conformally equivalent to the end-periodic metric $g_\infty$ but which do not need to be end-periodic themselves. The metrics in question will be of the form $g' = \sigma^2\,g_\infty$, where $\sigma: \zp \to \mathbb R$ is a positive smooth function such that 
\begin{enumerate}
\item[(a)] the scalar curvature of $g'$ is uniformly positive on $\zp$, and 
\item[(b)] both $\sigma$ and $\sigma^{-1}$ are bounded functions on $\zp$.
\end{enumerate}
An example of such a function is the function $\sigma = u^{2/(n-2)}$, where $u$ is constructed in Lemma \ref{lem:main}.

\begin{proposition}\label{P:two}
Let $\zp$ be an end-periodic manifold whose end is modeled on the infinite cyclic cover of $X$. Then, for any metric $g' = \sigma^2\,g_\infty$ as above, 
\begin{equation}\label{E:eq2}
\tw_{\rep}(X, \D^+)\, =\, k\cdot \ind \D^+ (\zp,g')\, -\, \ind \D^+_{\alpha} (\zp,g').
\end{equation}
\end{proposition}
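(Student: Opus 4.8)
The plan is to reduce Proposition~\ref{P:two} to Proposition~\ref{P:one} by showing that passing from $g_\infty$ to a conformally equivalent metric $g' = \sigma^2 g_\infty$ satisfying conditions (a) and (b) changes neither the Fredholmness nor the index of the relevant Dirac operators. The key point is that the spin Dirac operator is conformally covariant: there is an explicit isomorphism of spinor bundles intertwining $\D^+(\zp, g')$ and $\sigma^{-(n+1)/2}\,\D^+(\zp, g_\infty)\,\sigma^{(n-1)/2}$ (with the analogous statement for the twisted operators $\D^+_\alpha$), so the two operators differ by conjugation by a bounded, boundedly invertible multiplication operator together with multiplication by a fixed power of $\sigma$. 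Because $\sigma$ and $\sigma^{-1}$ are bounded by hypothesis (b), this conjugation preserves the relevant weighted $L^2$ spaces; because the scalar curvature of $g'$ is uniformly positive by hypothesis (a), the Weitzenböck formula shows $\D^+(\zp, g')$ and $\D^+_\alpha(\zp, g')$ are again uniformly invertible at infinity in the Gromov--Lawson sense, hence Fredholm on their $L^2$ closures.

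First I would record the conformal change formula for the Dirac operator precisely, fixing the bundle isomorphism and the weights, and note that it carries over verbatim to the twisted operator $\D^+_\alpha$ since twisting by the flat bundle $V_\alpha$ does not interact with the conformal factor. Second I would verify the mapping properties: the natural $L^2$ space for $\D^+(\zp, g')$ is identified, via the conformal isomorphism, with a space of the form $\sigma^c L^2(\zp, g_\infty)$ for an explicit constant $c$, and by (b) this coincides as a topological vector space with $L^2(\zp, g_\infty)$ with an equivalent norm. Third I would invoke the positive scalar curvature of $g'$ and the Weitzenböck identity to conclude uniform invertibility at infinity, so that the $L^2$-index of $\D^+(\zp, g')$ is defined. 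Fourth, and this is the crux, I would argue that this index equals $\ind \D^+(\zp, g_\infty)$: one way is to connect $g_\infty$ to $g'$ through the path of metrics $g_s = \sigma_s^2 g_\infty$ with $\sigma_s$ interpolating between $1$ and $\sigma$ (staying in the class (a)--(b), which requires a small argument that positivity of scalar curvature is preserved along a suitable path, or else a direct homotopy-invariance argument for the family of conjugated operators), and appeal to the homotopy invariance of the index of a continuous family of Fredholm operators; alternatively one observes directly that the conjugated operator $\sigma^{-(n+1)/2}\D^+(\zp,g_\infty)\sigma^{(n-1)/2}$ has the same kernel and cokernel dimensions as $\D^+(\zp,g_\infty)$ because conjugation by an isomorphism of the function spaces preserves these. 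Having established $\ind \D^+(\zp, g') = \ind \D^+(\zp, g_\infty)$ and likewise $\ind \D^+_\alpha(\zp, g') = \ind \D^+_\alpha(\zp, g_\infty)$, the identity \eqref{E:eq2} follows immediately from \eqref{E:eq1}.

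The main obstacle I anticipate is the fourth step: making the index-invariance under the conformal change rigorous on a non-compact manifold. On a closed manifold this is classical, but here one must be careful that the conjugating operator genuinely maps the domain of one $L^2$-closure onto the domain of the other and that the family of operators is continuous (indeed, norm-continuous after the uniform bounds from (b)) in the gap topology on closed operators. The hypotheses (a) and (b) are precisely what is needed to control this: (b) guarantees the conjugation is a bounded isomorphism of Sobolev/weighted-$L^2$ spaces with bounded inverse, and (a) guarantees that both endpoints—and, with a little care, every metric along the path—have the at-infinity invertibility that makes the $L^2$-index well-defined and locally constant. A secondary technical point is to confirm that the interpolating factors $\sigma_s$ can be chosen so that each $g_s$ still has uniformly positive scalar curvature; if this is awkward, the cleaner route is to avoid the homotopy entirely and argue purely algebraically that conjugation by the conformal bundle isomorphism is an isomorphism on kernels and cokernels, which sidesteps any path of metrics.
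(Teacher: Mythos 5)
Your proposal is correct and, in the ``direct'' alternative you recommend at the end, it is essentially the paper's own proof: the paper uses the conformal covariance formula $\D^+(\zp,g') = \sigma^{-(n+1)/2}\circ\D^+(\zp,g_\infty)\circ\sigma^{(n-1)/2}$ to exhibit an explicit isomorphism of kernels (and cokernels, and their twisted analogues) via multiplication by $\sigma^{-(n-1)/2}$, with condition (b) giving the $L^2$-norm comparability and condition (a) giving Fredholmness, and then invokes Proposition \ref{P:one}. The homotopy-of-metrics route you also sketch is unnecessary and, as you yourself note, problematic since intermediate metrics $\sigma_s^2 g_\infty$ need not have uniformly positive scalar curvature; the direct kernel/cokernel argument is the one to keep.
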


\begin{proof}
First note that the metric $g'$ has uniformly positive scalar curvature, therefore, the operators $\D^+ (\zp,g')$ and $\D^+_{\alpha} (\zp,g')$ are uniformly invertible at infinity so their $L^2$ closures are Fredholm. The Dirac operators on $\zp$ corresponding to the metrics $g_\infty$ and $g' = \sigma^2\,g_\infty$ are related by the formulas
\[
\D^+(\zp,g') = \sigma^{-(n+1)/2}\circ\D^+(\zp,g_\infty)\circ\sigma^{(n-1)/2},
\]
\[
\D^+_{\alpha}(\zp,g') = \sigma^{-(n+1)/2}\circ\D^+_{\alpha}(\zp,g_\infty)\circ\sigma^{(n-1)/2}.
\]
It follows that, for any $L^2$ spinor $\phi$ in the kernel of $\D^+ (\zp,g_\infty)$, the spinor $\phi' = \sigma^{-(n-1)/2}\phi$ is in the kernel of $\D^+(\zp,g')$ and, moreover,
\[
\|\phi'\|^2_{L^2(\zp,g')}  = \int_{\zp} |\phi'|^2 d\vol_{g'} = \int_{\zp} \sigma |\phi|^2 d\vol_{g_\infty}\;\le\;C\,\|\phi\|^2_{L^2(\zp,g_\infty)}
\]
for some constant $C > 0$. Since this construction is reversible, it establishes an isomorphism between the kernels of $\D^+ (\zp,g_\infty)$ and $\D^+ (\zp,g')$. A similar argument applied to the cokernel of $\D^+ (\zp,g_\infty)$, and then to the kernel and cokernel of $\D^+_{\alpha} (\zp,g_\infty)$, establishes the equalities $\ind \D^+ (\zp,g') = \ind \D^+ (\zp,g_\infty)$ and $\ind \D^+_{\alpha} (\zp,g') = \ind \D^+_{\alpha}(\zp,g_\infty)$. The statement now follows from Proposition \ref{P:one}.
\end{proof}


\section{Cobordisms of positive scalar curvature}\label{S:bordisms}
We begin by recalling the definition of the bordism group $\Omega^{\spin,+}_n (BG)$. Given a discrete group $G$, consider the triples $(X,g,f)$ consisting of a closed oriented spin manifold $X$ of dimension $n$, a positive scalar curvature metric $g$ on $X$, and a continuous map $f: X \to BG$. Two triples $(X_0, g_0, f_0)$ and $(X_1, g_1, f_1)$ represent the same class in $\Omega^{\spin,+}_n (BG)$ if there is a spin cobordism $Z$ between $X_0$ and $X_1$ which admits a positive scalar curvature metric $g_Z$ such that $g_Z = g_i + dt^2$ in collar neighborhoods of $X_i$, $i = 0, 1$, and a continuous map $Z \to BG$ extending the maps $X_0 \to BG$ and $X_1 \to BG$. 

To get the statement about the fundamental group in Theorem~\ref{T:one}, we need the following lemma, well-known to experts.
\begin{lemma}\label{L:pi1}
Let $n \ge 4$ and let $G$ be a finite group. Then any class in  $\Omega^{\spin,+}_n (BG)$ is represented by a triple $(X,g,f)$ where $f_*:\pi_1(X) \to \pi_1(BG) = G$ is an isomorphism.
\end{lemma}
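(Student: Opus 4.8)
The plan is to start with an arbitrary representative $(X,g,f)$ of the given bordism class and perform surgeries on $X$ to kill the kernel and fix the cokernel of $f_*\colon\pi_1(X)\to G$, all while preserving the psc metric and the bordism class. First I would reduce to the case that $f_*$ is surjective: since $G$ is finitely generated, pick loops in $X$ whose images generate $G$ together with the image of $f_*$, take a wedge of circles mapping to these generators, and form the connected sum of $X$ with copies of $S^1\times S^{n-1}$ realizing these loops. Because $n\ge 4$, a $0$-surgery (connected sum) can be done within the psc category by the Gromov--Lawson--Schoen--Yau surgery theorem, and the connected-sum region is traced out by a psc cobordism (the standard ``psc cobordism'' attached to a surgery), so the bordism class is unchanged; the map $f$ extends over the new handles by construction. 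After this step $f_*$ is onto.

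Next I would kill $\ker f_*$. Since $G$ is finite, $\pi_1(X)$ need not be finitely presented a priori, but using surjectivity of $f_*$ and the fact that $G$ is finitely presented, $\ker f_*$ is the normal closure of finitely many elements (this is a standard fact: a finitely generated group mapping onto a finitely presented group has kernel that is the normal closure of finitely many elements). Represent each such element by an embedded circle in $X$ with trivial normal bundle — possible since $n\ge 4$ gives enough room, and triviality of the normal bundle can be arranged using the spin structure. Perform $1$-surgeries on these circles. Again because the codimension is $n-1\ge 3$, the Gromov--Lawson--Schoen--Yau procedure produces a psc metric on the surgered manifold, and the surgery is realized by a psc cobordism carrying a map to $BG$ (the bounding circle is null-homotopic in $BG$ precisely because it lies in $\ker f_*$, so $f$ extends over the attached $2$-handle and hence over the trace). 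Each such surgery kills one normal generator of the kernel without creating new $\pi_1$, so after finitely many steps $f_*$ is an isomorphism. Throughout, one must also check the spin structure extends over each surgery, which is automatic for surgeries on circles with the stably trivialized normal bundle inherited from the spin structure.

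The main obstacle, and the step I would be most careful about, is ensuring simultaneously that (i) each surgery can be performed psc-compatibly — this needs codimension $\ge 3$, which is why the hypothesis $n\ge 4$ appears, and it needs the surgered submanifolds to be embedded with trivial normal bundle; (ii) the trace of each surgery is itself a psc cobordism with product collars matching the metrics, so that composing all these traces with the original cobordism $Z$ (if comparing two representatives) stays inside $\Omega^{\spin,+}_n(BG)$; and (iii) the reference map to $BG$ extends over every handle, which for $1$-handles is exactly the condition that the attaching circle maps to something null-homotopic in $BG$ — guaranteed for kernel elements — and for the $0$-handles (connected summands $S^1\times S^{n-1}$) is free since $BG$ receives the new generators by fiat. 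Assembling these, connected-sum and $1$-surgery traces are concatenated onto any cobordism realizing the original class; since each trace admits a psc metric which is a product near its ends, the concatenation is a legitimate psc spin cobordism over $BG$, so the class in $\Omega^{\spin,+}_n(BG)$ is preserved and the new representative has $\pi_1\cong G$ via $f_*$. Finally, one notes the new $X$ can be taken connected (the initial connected sum already ensures this if $X$ was disconnected, by summing the components together along $0$-surgeries, which are psc-compatible and extend over $BG$ trivially).
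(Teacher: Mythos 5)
Your proposal is correct and follows essentially the same route as the paper: first make $f_*$ surjective by attaching $1$-handles (connected sums with $S^1\times S^{n-1}$), then kill the kernel by surgeries on embedded circles of codimension $n-1\ge 3$, using the fact that the trace of each handle attachment carries a psc metric which is a product near the ends (the paper cites Gajer for exactly this), so the bordism class is preserved. The only divergence is minor: the paper deduces finite generation of the kernel from its finite index in $\pi_1(X_1)$ (using that $G$ is finite), whereas you invoke the standard fact about kernels of surjections onto finitely presented groups --- both work, though your aside that ``$\pi_1(X)$ need not be finitely presented a priori'' is moot since $X$ is a closed manifold.
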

\begin{proof}
First note that we may assume that $f_*$ is surjective. To see this, choose a finite generating set $\{\gamma_1,\ldots,\gamma_k\}$ for $\Gamma$. By~\cite{gajer:cobordism}, the spin cobordism $W$ obtained by adding $k$ $1$-handles to $X \times I$ has a psc metric extending the one on $X$.  Evidently, the map $f:X \to BG$ extends over $W$, inducing a map on $\pi_1(W) \cong \pi_1(X) \ast F\langle g_1,\ldots,g_k\rangle \to G$ that sends the  generators $g_j$ to $\gamma_j$. Let $(X_1,g_1,f_1)$ denote the new boundary component of $W$; then $\pi_1(X_1) \to \pi_1(W)$ is an isomorphism and so $(f_1)_*$ is a surjection.

Let $K$ denote the kernel of $(f_1)_*$. Since $G$ is finite, $K$ is of finite index in $\pi_1(X_1)$ and hence is finitely generated. Hence one can do a further surgery on circles representing these elements, preserving the spin structure, and changing the fundamental group to $G$. Since $n\ge 4$, the  circles along which the surgeries are done have codimension at least $3$. Using~\cite{gajer:cobordism} once more, the cobordism gotten by adding $2$-handles to $X_1 \times I$ is a spin cobordism with a psc metric and with a map to $B\Gamma$ inducing an isomorphism on the fundamental group. The new boundary component of this cobordism is the desired representative of the original class in $\Omega^{\spin,+}_n (B\Gamma)$.
\end{proof}
From now on, we will assume that $3 \leq n \leq 6$. Let $(X_0,g_0)$ and $(X_1,g_1)$ be $n$--dimensional closed Riemannian spin manifolds of positive scalar curvature, and fix a choice of primitive cohomology classes $\gamma_i \in H^1(X_i; \Z)$, $i = 0,1$. According to \cite{SY}, there exist smoothly embedded volume minimizing hypersurfaces $Y_i\subset X_i$ which are Poincar\'e dual to $\gamma_i$ and which admit positive scalar curvature metrics $g_{Y_i}$ in the same conformal class as the metrics $g_i|_{Y_i}$ obtained by restriction. 

Cutting $X_i$ open along $Y_i$, where $i = 0,1$, we obtain cobordisms $W_i$ with boundary $\partial W_i = Y_i^- \cup Y_i^+$ as in Section \ref{S:periodic}. Note for future use that the infinite cyclic covers $\widetilde X_i \to X_i$ can be written as the infinite unions
\[
\widetilde X_i \,=\, \bigcup_{j\in\Z}\; W_i
\]
with appropriate identifications of the boundary components, and denote the respective `half-infinite' unions by 
\[
\widetilde X_i^-\, =\, \bigcup_{j\leq0}\; W_i \quad\text{and}\quad  
\widetilde X_i^+\, =\, \bigcup_{j\geq0}\; W_i.
\]

Let us now assume that $(X_0,g_0,\gamma_0)$ and $(X_1,g_1,\gamma_1)$ represent the same class in the psc-bordism group $\Omega^{\spin,+}_n(S^1)$. Then, according to \cite[Theorem 5]{BK}, there exists a spin cobordism $Z$ from $Y_0$ to $Y_1$ with a positive scalar curvature metric $g_Z$ which is a product metric $g_Z = dt^2 + g_{Y_i}$ near $Y_i$, $i = 0,1$. The metric $g_Z$ does not extend in any obvious way to a positive scalar curvature metric on the end-periodic manifold 
\begin{equation}\label{E:ZP}
\zp\, =\, \widetilde X_0^-\,\cup\, Z\, \cup\, \widetilde X_1^+.
\end{equation}
To construct such a metric, we proceed as follows. Note that $\zp$ has two ends, one of which, $\widetilde X_1^+$, is attached to the component $Y_1$ of the boundary of $Z$, and the other, $\widetilde X_0^-$, to the component $Y_0$ with reversed orientation. Correspondingly, we will write $Y_0^-$ and $Y_1^-$ for the boundaries of $\widetilde X_0^-$ and $\widetilde X_1^+$. We will use a similar convention for the boundary components of $W_i$ so the `$-$' component will be the one closer to the compact piece. Using \cite{AB}, replace the metric $g_i$ on $W_i$, $i = 0, 1$, with a new metric $g_i'$ satisfying the following conditions\,:
\begin{enumerate}
\item $g_i'=g_i$ away from a neighborhood of $Y_i^-$,
\item $g_i'=dt^2 + g_{Y_i}$ near $Y_i^-$, and
\item the conformal class $(W_i,[g_i'])$ is Yamabe positive; cf. Section \ref{sec:conformallaplacian}.
\end{enumerate}

Note that the scalar curvature of $g_i'$ may be negative. Using the product structure of $g_i'$, equip $\zp$ with the smooth metric
\smallskip
\[
g_\infty=
\begin{cases}
\; g_0&\text{ on }\quad\bigcup_{j\leq-1} W_0\\
\; g_0'&\text{ on }\quad\bigcup_{j=0}\; W_0\\
\; g_Z&\text{ on }\quad\, Z\\
\; g_1'&\text{ on }\quad\bigcup_{j=0}\; W_1\\
\; g_1&\text{ on }\quad\bigcup_{j\geq1}\; W_1.
\end{cases}
\]
\smallskip\noindent

The gluing near $Z$ can be visualized in Figure \ref{F:metrics} below.\\[1ex]
\begin{figure}[htb]
\labellist
\small\hair 2pt
\pinlabel {$W_0$} [ ] at 100 48
\pinlabel {$W_1$} [ ] at 430 48
\pinlabel {$Y_0^+$} [ ] at  -5 75
\pinlabel {$Y_0^-$} [ ] at 225 75
\pinlabel {$Y_1^-$} [ ] at 305 75
\pinlabel {$Y_1^+$} [ ] at 535 75
\pinlabel {$g_0$} [ ] at 100 99
\pinlabel {$g_1$} [ ] at 420 99
\pinlabel {$dt^2+ g_{Y_0}$} [ ] at 182 101
\pinlabel {$dt^2+ g_{Y_1}$} [ ] at 343 101
 
\pinlabel {$Z$} [ ] at 265 200
\pinlabel {$Y_0$} [ ] at  150 232
\pinlabel {$Y_1$} [ ] at 375 232
\pinlabel {$dt^2+ g_{Y_0}$} [ ] at 188 256
\pinlabel {$dt^2+ g_{Y_1}$} [ ] at 343 256

\endlabellist
\centering
\includegraphics[scale=0.75]{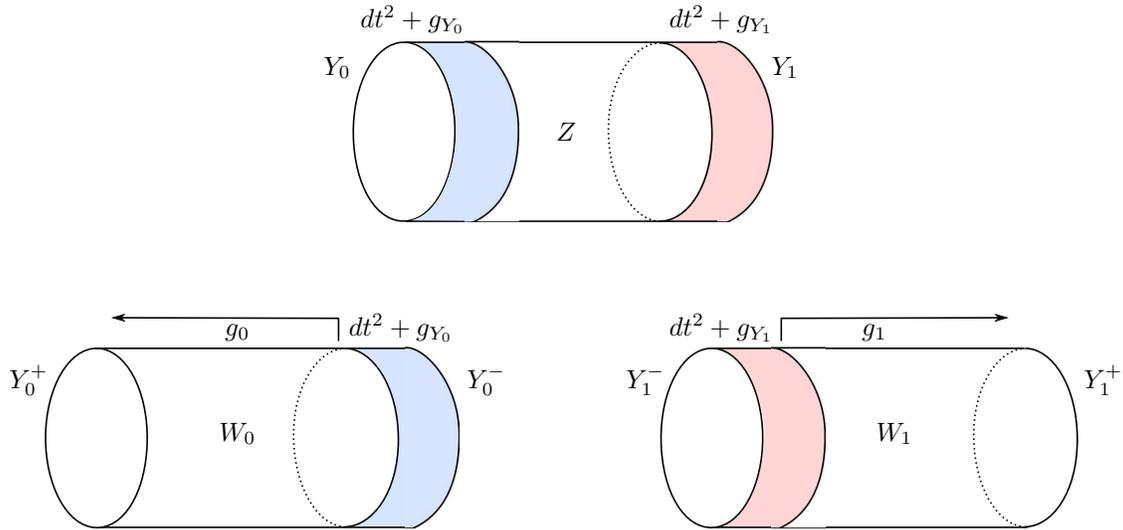}
\caption{Initial metric $g_\infty$}
\label{F:metrics}
\end{figure}

In what follows, we will use the notations
\[
X=X_0\sqcup X_1,\quad \widetilde X = \widetilde X_0\sqcup\widetilde X_1,\quad
W=W_0\sqcup W_1,\quad Y=Y_0\sqcup Y_1
\]
and, for the two metrics on $W$,
\[
g_W = g_0\sqcup g_1\quad\text{and}\quad g'_W = g'_0\sqcup g'_1.
\]
We will also fix a smooth function $f: \zp \to \R$ such that $f(\tau(x)) = x + 1$, where $\tau$ is the covering translation on either end, $f$ is constant on each copy of $Y$, $f^{-1} ([-1,0]) = Z$, and $f^{-1}([j,j+1])$ is the $j$th copy of $W$. 
The following is our main technical result, which will be proved in the next section. It roughly states that, in spite of the possible negative scalar curvature that $g_i'$ introduces, the non-compact manifold $(\zp,g_\infty)$ admits a well-controlled conformal change to a uniformly positive scalar curvature metric.

\begin{lemma}\label{lem:main}
There exists a positive function $u: \zp \to \R$ such that
\begin{enumerate}
\item $|u-1|\, \leq\, Ce^{-Bf}$ for some constants $B > 0$ and $C > 0$, and
\item the scalar curvature of the metric $u^{4/(n-2)}g_\infty$ is bounded from below by a positive constant.
\end{enumerate} 
\end{lemma}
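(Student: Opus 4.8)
The plan is to solve the Yamabe-type equation for the conformal factor $u$ using the method of sub- and super-solutions adapted to the non-compact end-periodic setting, exploiting the fact that the scalar curvature of $g_\infty$ is already uniformly positive outside the compact transition region $f^{-1}([-2,1])$ (say). Write $n' = n+1$ for the dimension of $\zp$, $c_{n'} = 4(n'-1)/(n'-2)$, and let $s_\infty$ denote the scalar curvature of $g_\infty$. We seek $u > 0$ solving
\[
-c_{n'}\,\Delta_{g_\infty} u + s_\infty\, u \;=\; s'\, u^{(n'+2)/(n'-2)}
\]
with $s' > 0$ a positive constant (to be chosen small), and with $u \to 1$ exponentially at both ends. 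The key structural fact is that on the two periodic ends $g_\infty$ equals the lift of $g_i$, which has positive scalar curvature bounded below by some $\kappa > 0$; only on the compact piece containing $Z$ and the collars governed by the metrics $g_i'$ can $s_\infty$ be negative, and there it is at worst bounded below by $-\Lambda$ for some constant $\Lambda$.

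First I would establish the analytic framework: the conformal Laplacian $L_{g_\infty} = -c_{n'}\Delta_{g_\infty} + s_\infty$ is uniformly invertible at infinity in the Gromov--Lawson sense because $s_\infty > 0$ there, so it is Fredholm on suitable weighted Sobolev spaces, and in fact its $L^2$-spectrum near $0$ is governed by the ends; since on each end $L_{g_\infty}$ agrees with the pullback of the (positive) conformal Laplacian of $X_i$, the operator $L_{g_\infty}$ on $\zp$ is actually invertible as a map between appropriate exponentially-weighted Hölder spaces $C^{2,\beta}_\delta \to C^{0,\beta}_\delta$ for $\delta > 0$ small (no $L^2$-kernel because a kernel element would be harmonic-at-infinity hence decaying, and then integration by parts against the positivity of $s_\infty$ on the ends plus a maximum-principle / unique-continuation argument forces it to vanish). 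This invertibility is what lets one correct the "initial guess" $u \equiv 1$ by an exponentially decaying error.

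Next I would run the sub/super-solution iteration. The constant function $u_+ \equiv A$ for $A$ large is a supersolution provided $s_\infty A \geq s' A^{(n'+2)/(n'-2)}$, i.e. provided $s' \leq s_\infty / A^{4/(n'-2)}$ on the region where $s_\infty > 0$, together with a contribution on the bad region — more carefully, one takes $u_+$ to be a large constant times a fixed smooth positive function that is $\equiv 1$ on the ends and solves away the negative-curvature bump, using that Yamabe-positivity of $(W_i,[g_i'])$ guarantees a positive first conformal eigenvalue there so $L_{g_\infty}$ is positive on a neighborhood of the transition region; this produces a genuine supersolution with $s'$ a fixed small positive constant. For the subsolution, since $L_{g_\infty}$ is invertible on the weighted spaces one solves $L_{g_\infty} w = \epsilon\,\chi$ where $\chi \geq 0$ is supported on the ends (or simply $L_{g_\infty} w = \epsilon$ with $\epsilon$ a small constant modified to decay), getting $w$ with $|w| \leq C e^{-Bf}$ by the weighted elliptic estimate; then $u_- = 1 - w$ for small $\epsilon$ is a positive subsolution lying below $u_+$, and it satisfies $1 - Ce^{-Bf} \leq u_- \leq u_+$. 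Monotone iteration starting from $u_+$ (or the standard ordered sub/supersolution theorem on non-compact manifolds, which applies because the nonlinearity $t \mapsto s' t^{(n'+2)/(n'-2)}$ is increasing and the region between $u_-$ and $u_+$ is precompact in $C^0$) yields a solution $u$ with $u_- \leq u \leq u_+$. Standard interior Schauder estimates give smoothness, and trapping between $u_-$ and $u_+$ together with a second bootstrapping of the equation for $u - 1$ (which satisfies $L_{g_\infty}(u-1) = (\text{exponentially small data}) + (\text{lower order in }u-1)$ on the ends) upgrades the one-sided bound to the two-sided estimate $|u - 1| \leq C e^{-Bf}$ in conclusion (1); by construction the new scalar curvature is the constant $s' > 0$, giving (2).

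The main obstacle I expect is the careful handling of the transition region where $g_\infty = g_i'$ may have negative scalar curvature: one must genuinely use hypothesis (3), Yamabe-positivity of $(W_i, [g_i'])$, to ensure that the conformal Laplacian $L_{g_\infty}$ restricted to a bounded neighborhood of this region — with appropriate boundary conditions matching the positive-curvature ends and the positive-curvature piece $Z$ — has positive bottom eigenvalue, so that the supersolution can be constructed and the solution's positivity is not lost. Equivalently, one needs that the \emph{global} conformal Laplacian on $(\zp, g_\infty)$ has no negative or zero spectrum, which combines the positivity on the ends (automatic from psc of $g_i$), positivity on $Z$ (from psc of $g_Z$), and Yamabe-positivity on the two copies of $W_i$ carrying $g_i'$, patched together — this patching, and making the weighted-space invertibility precise with the correct decay rate $\delta < B$, is the technical heart of the argument. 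A secondary subtlety is ensuring the decay estimate in (1) has a rate $B$ that is uniform and explicitly tied to the spectral gap of the conformal Laplacians of the closed manifolds $X_i$ at infinity.
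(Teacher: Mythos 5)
There is a genuine problem with the asymptotics in your setup. You propose to solve the Yamabe-type equation $L_{g_\infty}u = s'\,u^{(n'+2)/(n'-2)}$ with $s'>0$ a \emph{constant}, and simultaneously to conclude $|u-1|\le Ce^{-Bf}$. These two requirements are incompatible: on the periodic ends $g_\infty$ is the lift of $g_i$, whose scalar curvature is a fixed positive but in general non-constant (periodic) function. If $u\to 1$ at infinity, then by local Schauder estimates $u\to 1$ in $C^2_{loc}$ along translated fundamental domains, and passing to the limit in the equation forces $c_n R_{g_\infty}=s'$ on the ends, i.e.\ the metrics $g_i$ would have to have that particular constant scalar curvature. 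So conclusion (1) cannot be obtained for your equation; the solution would instead converge to the (generally non-constant) periodic Yamabe-type solution on each end, and no bootstrapping of $u-1$ can rescue the estimate. Relatedly, your ordered pair does not trap $u$ near $1$: your supersolution is a large constant at infinity, so $u_-\le u\le u_+$ only gives a one-sided bound. (A smaller but consequential slip: $\zp$ has dimension $n$, not $n+1$ --- it is glued from the infinite cyclic covers of the $X_i$ and the $n$-dimensional cobordism $Z$ between the hypersurfaces $Y_i$ --- so the conformal exponent must be $4/(n-2)$ as in the statement.) The paper avoids the nonlinear equation altogether: since only \emph{uniformly positive}, not constant, scalar curvature is needed, it solves the \emph{linear} problem $L_{g_\infty}u=h$ with $h>0$ chosen equal to $c_nR_{g_\infty}$ outside a compact set, so that $v=u-1$ solves $L_{g_\infty}v=\tilde h$ with $\tilde h$ compactly supported, and exponential decay of $v$ comes from barriers built out of $e^{-Bf}$ and a positive solution $w$ of $(L_{g_\infty}-\bar\lambda)w=0$.

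The second issue is that the one ingredient you correctly identify as ``the technical heart'' --- uniform spectral positivity of $L_{g_\infty}$ obtained by patching the positivity on the ends, on $Z$, and the Yamabe positivity of $(W_i,[g_i'])$ --- is exactly the step you do not carry out, and your sketch of it is too weak: the integration-by-parts/no-$L^2$-kernel argument fails precisely because $R_{g_\infty}$ may be negative on the transition region, and the ``fixed smooth positive function that solves away the negative-curvature bump'' is not constructed. In the paper this is Proposition 4.2: a lower bound, uniform in $k$, for the principal eigenvalue of the conformal Laplacian on the exhaustion $Z_k$ with the Escobar boundary condition, which reduces to the Neumann condition because the hypersurfaces $Y_i$ are \emph{minimal} and all the glued metrics are products near the boundaries (a point your proposal never uses); the bound follows by splitting the Rayleigh quotient over the pieces $(Z,g_Z)$, $(W,g_W)$, $(W,g_W')$ and taking a convex-combination infimum. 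That uniform bound is what produces the global positive function $w$ with $L_{g_\infty}w=\bar\lambda w>0$ (via exhaustion, Harnack and Schauder), and it is also what would be needed to justify your weighted-space invertibility and supersolution. Without supplying this patching argument, and with the constant-curvature ansatz contradicting the decay requirement, the proposal does not prove the lemma.
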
 

With Lemma \ref{lem:main} in place, we can complete the proof of Theorem \ref{T:two}. Assume that $(X_0, g_0, \gamma_0)$ and $(X_1, g_1, \gamma_1)$ represent the same class in the group $\Omega^{\spin,+}_n (S^1)$, and that the representations $\alpha_0: \pi_1 (X_0) \to U(k)$ and $\alpha_1: \pi_1 (X_1) \to U(k)$ extend to a representation of the fundamental group of the psc cobordism. Then the end-periodic manifold $\eqref{E:ZP}$ admits a uniformly positive scalar curvature metric $g'$ to which Proposition \ref{P:two} applies. This results in the formula 
\[
\tw_{\rep}(X_1, \D^+) - \tw_{\rep}(X_0, \D^+)\, =\, k\cdot \ind \D^+ (\zp,g')\, -\, \ind \D^+_{\alpha} (\zp,g').
\]
Since the metric $g'$ has positive scalar curvature, both indices on the right hand side of this formula must vanish, leading us to the conclusion that
\[
\tw_{\rep}(X_0, \D^+) \, =\, \tw_{\rep}(X_1, \D^+).
\]


\section{Proof of the main lemma}\label{S:laplace}
Our proof of Lemma \ref{lem:main} will be a modification of the argument from \cite[Proposition 4.6]{CM} dealing with a cylindrical Yamabe problem. We begin by recalling some basic facts about the conformal Laplacian on compact manifolds with boundary which will be essential in our proof.


\subsection{Conformal Laplacian on compact manifolds with boundary} \label{sec:conformallaplacian}
Let $(M,g)$ be a compact oriented $n$-dimensional Riemannian manifold with non-empty boundary $\partial M$. Denote by $\nu$ the outward normal vector to $\partial M$, by $R_g$ the scalar curvature of $M$, and by $H_g$ the mean curvature of $\partial M$. Consider the following pair of operators acting on $C^\infty(M)$:
\begin{equation}\label{eq:conformallaplacian}
\begin{cases}
\; L_g =-\Delta_g+c_n R_g&\quad \text{in $M$}, \\
\; B_g =\;\partial_\nu+2 c_nH_g&\quad \text{on $\partial M$},
\end{cases}
\end{equation}
where $c_n=(n-2)/(4(n-1))$. These operators describe the change in the scalar and boundary mean curvatures under a conformal change of metric: a standard computation shows that, for a positive function $\phi\in C^\infty(M)$, the scalar and boundary mean curvatures of the metric $\tilde g = \phi^{4/(n-2)}g$ are given by the formulas
\begin{equation}\label{eq:conformalchange}
\left\{
\begin{array}{lcll}
  R_{\tilde{g}} &=& c_n^{-1} \phi^{-(n+2)/(n-2)} \, L_{g} \phi & \mbox{in $M$}, \\
   H_{\tilde{g}} & = &
  \frac{1}{2}\, c_n^{-1}\phi^{-n/(n-2)} \, B_{g} \phi &
  \mbox{on $\partial M$}.
\end{array}
\right.
\end{equation}

Associated with the operators \eqref{eq:conformallaplacian} and every real--valued function $0 \neq \phi \in C^\infty(M)$ is the Rayleigh quotient
\[
\ddfrac{Q_g(\phi)}{\int_M\phi^2\,d\mu}
\]
where 
\begin{equation}\label{eq:Rayleigh2}
Q_g(\phi)=\int_M\left(|\nabla\phi|^2+c_nR_{g}\phi^2\right)d\mu+2c_n\int_{\partial M}H_{g}\phi^2d\sigma
\end{equation}
and $d\mu$ and $d\sigma$ denote the volume forms associated to $g$ and $g|_{\partial M}$, respectively. According to the standard elliptic theory,
\begin{equation}\label{eq:Rayleigh1}
\lambda\; = \inf_{0\neq \phi \in H^1(M)} \ddfrac{Q_g(\phi)}{\int_M\phi^2\,d\mu}
\end{equation}
is the principal eigenvalue of the boundary value problem $(L_g,B_g)$ and there is a positive function $\phi\in C^\infty(M)$, unique up to scalar multiplication, such that
\begin{equation}\label{eq:eigenvalue}
\left\{
\begin{array}{lcll}
  L_{g}\phi &= & \lambda\phi&\quad \text{in $M$},
  \\
  B_{g}\phi &= & 0 &\quad\text{on $\partial M$}.
\end{array}
\right.
\end{equation}  
This eigenvalue problem was first studied by Escobar \cite{E1} in the context of the Yamabe problem on manifolds with boundary.  

Let $\phi$ be a positive solution of (\ref{eq:eigenvalue}) and consider the metric $\tilde{g}=\phi^{4/(n-1)}g$. It follows from (\ref{eq:conformalchange}) that the boundary mean curvature of the metric $\tilde g$ vanishes, and that the scalar curvature of $\tilde g$ has a constant sign agreeing with the sign of $\lambda$. In particular, the sign of $\lambda$ is an invariant of the conformal class $[g]$ of metric $g$. A conformal manifold $(M,[g])$ is called {\emph{Yamabe positive, negative, or null}} if $\lambda$ is respectively positive, negative, or zero; see Escobar \cite{E1,E3}.


\subsection{Preliminary eigenvalue estimates}
For each positive integer $k$, denote by $Z_k$ the compact submanifold of $\zp$ obtained by attaching only the first $k$ copies of $W$ to $Z$. Let $g_k$ be the metric $g_\infty$ restricted to $Z_k$, and $\lambda(k)$ the principal eigenvalue of the boundary value problem \eqref{eq:eigenvalue} for the operators $(L_{g_k}, B_{g_k})$. Since the hypersurfaces $Y_0$ and $Y_1$ were chosen to be minimal, each $Z_k$ has  vanishing boundary mean curvature, and the Rayleigh quotient (\ref{eq:Rayleigh1}) for $\lambda(k)$ takes the form
\begin{equation}\label{eq:Rayleigh3}
\lambda(k)\; =\inf_{0\neq \phi \in H^1(Z_k)}\;
	\ddfrac{\int_{Z_k}\,(|\nabla\phi|^2+c_{n}R_{g_k}\phi^2)\,d\mu}{\int_{Z_k}\phi^2\,d\mu}
\end{equation}

\begin{proposition}\label{prop:eigenvalues}
There are positive constants $C_1$ and $C_2$ depending only on the metrics $g_Z$, $g_W$, and $g'_W$ such that, for all positive $k$, 
\[
C_1\; \le\; \lambda(k)\; \le\; C_2.
\]
\end{proposition}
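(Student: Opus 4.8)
\textbf{Proof proposal for Proposition \ref{prop:eigenvalues}.}

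The plan is to bound $\lambda(k)$ from above and below separately, using the min-max characterization \eqref{eq:Rayleigh3} together with the crucial geometric fact that the metric $g_\infty$ on $\zp$ is built from only three isometry types of pieces: the fixed compact cobordism $(Z,g_Z)$, and the periodic blocks $(W,g_W)$ and $(W,g'_W)$ glued end-to-end. Since the boundary mean curvature of each $Z_k$ vanishes (the hypersurfaces $Y_i$ were chosen minimal), the boundary term drops out and we are reduced to estimating the ordinary Rayleigh quotient $\int_{Z_k}(|\nabla\phi|^2 + c_n R_{g_k}\phi^2)/\int_{Z_k}\phi^2$. For the upper bound $\lambda(k)\le C_2$, I would simply use a fixed test function supported in $Z$ (say a bump function $\phi_0$ supported away from $\partial Z$, extended by zero to all of $Z_k$): then $Q_{g_k}(\phi_0)/\|\phi_0\|^2$ is a constant independent of $k$, which gives $C_2$ immediately.

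The lower bound $\lambda(k)\ge C_1>0$ is the substantive half. Here the strategy is to exploit that $\zp$ is, away from $Z$ and the two transition blocks, isometric to the infinite cyclic covers $\widetilde X_i$, whose ambient closed manifolds $(X_i,g_i)$ have \emph{positive} scalar curvature. Concretely, I would decompose $Z_k$ into the central piece $Z$, the (at most two) copies of the transition block $(W,g'_W)$, and the remaining copies of $(W,g_W)$ whose union is part of a psc manifold. On the psc pieces, $c_n R_{g_k}$ is bounded below by a positive constant $\delta$, so on that region the integrand $|\nabla\phi|^2 + c_n R_{g_k}\phi^2$ already dominates $\delta\phi^2$. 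On the finitely many bad pieces ($Z$ and the transition blocks), $R_{g_k}$ may be negative but is bounded below by some constant $-K$ depending only on the fixed metrics $g_Z$ and $g'_W$. The difficulty is that a test function could concentrate its $L^2$ mass on the bad region while keeping $|\nabla\phi|$ small, which is exactly the failure mode one must rule out. To control this I would use a standard trick: since $(W,g'_W)$ is Yamabe positive by construction (condition (3) in the definition of $g'_i$), and $(W,g_W)$ sits inside a psc manifold hence is also Yamabe positive, and $(Z,g_Z)$ has positive scalar curvature, each \emph{individual} block admits a positive function $\psi$ with $L_{g}\psi = \mu\psi$, $B_g\psi=0$ on the minimal boundary components, for some $\mu>0$; writing $\phi = \psi w$ and integrating by parts converts $Q_{g_k}(\phi)$ restricted to that block into $\int \psi^2|\nabla w|^2 + \mu\int\psi^2 w^2$ plus boundary terms coming from the non-minimal interfaces between blocks. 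Summing over blocks, the interior terms are all nonnegative with a definite positive coefficient on the $\int \psi^2 w^2$ parts, and the interface boundary terms telescope/cancel because adjacent blocks share the product metric $dt^2+g_{Y_i}$ along $Y_i$; this yields $Q_{g_k}(\phi)\ge c\int_{Z_k}\phi^2$ with $c>0$ independent of $k$.

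Carrying out the substitution $\phi=\psi w$ cleanly is the main obstacle: one must choose the block eigenfunctions $\psi$ on $(W,g_W)$, $(W,g'_W)$ and $(Z,g_Z)$ compatibly so that they patch to a single positive, Lipschitz (ideally smooth) function $\Psi$ on all of $Z_k$ with uniformly bounded $\Psi$ and $\Psi^{-1}$, and so that the conjugated operator $\Psi^{-1}L_{g_k}\Psi$ has the form $-\operatorname{div}(\Psi^2\nabla\,\cdot\,) + V\Psi^2$ with $V$ bounded below by a positive constant, picking up only controlled (sign-definite or cancelling) boundary contributions along the gluing hypersurfaces. Because only finitely many block-types occur, all constants ($\mu$, the bounds on $\Psi$, the interface terms) depend only on $g_Z$, $g_W$, $g'_W$, which is precisely the uniformity claimed. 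An alternative, slightly softer route to the lower bound—avoiding the conjugation entirely—is a contradiction/compactness argument: if $\lambda(k_j)\to 0$ along a subsequence, take $L^2$-normalized near-minimizers $\phi_j$; using the psc lower bound away from the bad region one shows the $\phi_j$ have $L^2$-mass and $H^1$-norm concentrating in a fixed bounded neighborhood of $Z$ (of diameter independent of $j$), then extract an $H^1$ limit on that fixed compact piece which is a nonzero function with $Q\le 0$ and vanishing Neumann-type data, contradicting the strict positivity of the principal eigenvalue on the (Yamabe positive) fixed region. I would present the conjugation argument as the main proof since it gives the explicit uniform constants directly, and mention the compactness argument only if the boundary-term bookkeeping becomes unwieldy.
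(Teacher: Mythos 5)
Your upper bound is fine and essentially the paper's (the paper uses the constant test function $\phi\equiv 1$, which gives $\lambda(k)\le c_n\sup R_{g_\infty}$; your fixed bump in $Z$ works just as well). For the lower bound you have assembled all the right ingredients — Yamabe positivity of the three block types $(Z,g_Z)$, $(W,g_W)$, $(W,g'_W)$ and minimality of the interface hypersurfaces — but the mechanism you lean on is both unnecessary and genuinely gapped. The gap is the one you yourself flag: the Neumann principal eigenfunctions of adjacent blocks are each only determined up to a single scalar, while their traces on a shared interface $Y_i$ are in general non-proportional positive \emph{functions} on $Y_i$, so they cannot be patched into a continuous global conjugator $\Psi$; consequently the interface terms produced by integrating $\phi=\Psi w$ by parts do not ``telescope,'' and the main version of your argument does not close. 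The compactness alternative you sketch could be made to work but is softer than what is needed and would require its own care.

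The observation that dissolves the difficulty is that no global conjugation (and no interface bookkeeping) is needed at all. Because every hypersurface involved — the outer boundary $\partial Z_k$ and every internal copy of $Y_0$, $Y_1$ — is minimal, the quadratic form $Q_{g_k}(\phi)=\int_{Z_k}\bigl(|\nabla\phi|^2+c_nR_{g_k}\phi^2\bigr)\,d\mu$ is \emph{exactly} additive over the decomposition of $Z_k$ into its blocks: restricting $\phi$ to a block $B$ and computing $Q_B(\phi|_B)$ reintroduces no boundary term since $H=0$. On each block the \emph{unconstrained} variational characterization of the Neumann principal eigenvalue (which is exactly what Yamabe positivity plus minimal boundary gives you) yields $Q_B(\phi|_B)\ge \lambda_N(B)\,\|\phi\|^2_{L^2(B)}$ for \emph{every} $H^1$ function, with no boundary condition imposed on $\phi|_B$. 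Summing over blocks gives
\[
Q_{g_k}(\phi)\;\ge\;\min\bigl\{\lambda_N(Z,g_Z),\,\lambda_N(W,g_W),\,\lambda_N(W,g'_W)\bigr\}\cdot\|\phi\|^2_{L^2(Z_k)},
\]
which is the paper's $C_1$, manifestly independent of $k$. The concentration scenario you worry about (mass piling up on $Z$ or the transition blocks with small gradient) is ruled out precisely by the positivity of the Neumann eigenvalues of those blocks — that is the content of their Yamabe positivity, not something that needs a separate argument. Note also that your per-block conjugation, if carried out block by block \emph{without} patching, produces vanishing boundary terms (each block eigenfunction satisfies $\partial_\nu\psi=0$ on its own boundary) and reduces to exactly this variational inequality; it is only the insistence on a single global $\Psi$ that creates the obstacle.
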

\begin{proof}
The conformal manifolds $(Z,[g_Z])$, $(W,[g_W])$, and $(W,[g'_W])$ are all Yamabe positive by construction. As mentioned in Section \ref{sec:conformallaplacian}, this is equivalent to the positivity of the principal eigenvalue of the boundary value problem \eqref{eq:conformallaplacian}.
Since the boundaries of $(Z,g_Z)$, $(W,g_W)$, and $(W,g'_W)$ are all minimal, their mean curvatures vanish and the boundary conditions in \eqref{eq:conformallaplacian} reduce to the Neumann boundary conditions, $\partial_\nu\phi = 0$. This implies that the principal Neumann eigenvalues $\lambda (L_{g_Z})$, $\lambda (L_{g_W})$, and $\lambda (L_{g'_W})$ are all positive. 

Let us consider the principal Neumann eigenvalue $\lambda (1) = \lambda(L_{g_{W_1}})$ of the operator $L_{g_{W_1}}$. Since $W_1$ is split into $(Z,g_Z)$ and $(W,g'_W)$,
\[
\lambda(L_{g_{W_1}})=\inf\left\{\,Q_{g_{W_1}}(\phi)\; |\; 0\neq\phi\in H^1(W_1),\; ||\phi||^2_{L^2}=1\right\}  
\]
can be estimated from below by
\begin{multline}\notag
 \inf \Big\{Q_{g_Z}(\phi_1) + Q_{g'_W}(\phi_2)\; | \; \phi_1\in H^1(Z),\; \phi_2\in H^1(W),\; ||\phi_1||^2_{L^2}+||\phi_2||^2_{L^2} = 1\Big\} = \\ 
 \inf_{a\in[0,1]} \inf \Big\{ a\cdot \frac{Q_{g_Z}(\phi_1)}{||\phi_1||^2_{L^2}}\, +\, (1-a)\cdot \frac{Q_{g'_{W}}(\phi_2)}{||\phi_2||^2_{L^2}}\;\; \Big| \; \phi_1\in H^1(Z), \; \phi_2\in H^1(W), \\ ||\phi_1||^2_{L^2}=a,\; ||\phi_2||^2_{L^2}=1-a \Big\},
\end{multline}
which is in turn estimated from below by
\[
\inf_{a\in[0,1]} \Big\{a\cdot \lambda(L_{g_Z}) + (1-a)\cdot \lambda(L_{g'_{W}})\Big\}\; =\; \min\Big\{\lambda(L_{g_Z}),\, \lambda(L_{g'_{W}})\Big\} > 0.
\]
By splitting $Z_k$ into $(Z_{k-1},g_{k-1})$ and $(W,g_W)$ and proceeding inductively, one can use the above argument to show that 
\[
\lambda(k)\;\geq\;\min\Big\{\lambda(L_{g_Z}),\, \lambda(L_{g_W}),\, \lambda(L_{g'_W})\Big\} > 0
\] 
for all positive $k$. This gives the desired lower bound on the eigenvalues $\lambda(k)$. To obtain the upper bound, choose the constant test function $\phi = 1$ in the Rayleigh quotient to obtain
\[
\lambda(k)\; \leq\; \frac 1 {\operatorname{vol}(Z_k)} \cdot \int_{Z_k} c_n R_{g_k}\,d\mu\; \leq\; c_n\cdot \sup_{Z_k}\; (R_{g_k}).
\]
Since the scalar curvature of $g_\infty$ is uniformly bounded from above, this gives the desired upper bound on $\lambda(k)$.
\end{proof}


\subsection{Strategy of the proof} 
The function $u: \zp \to \mathbb R$ whose existence is claimed in Lemma \ref{lem:main} will be obtained as a solution of the equation $L_{g_\infty} (u) = h$, where $h: \zp \to \R$ is a smooth positive function such that
\begin{enumerate}
\item $h = c_n R_{g_\infty}$ on $\zp \setminus Z_3$ and 
\item $h\geq |\,c_nR_{g_\infty}|$ on $Z_3$.
\end{enumerate}
To solve this equation, we will first solve the equation $L_{g_{\infty}} (v) = \tilde h$ for the function $\tilde h = h - c_n R_{g_\infty}$, which is positive and compactly supported in $Z_3$, and then let $u=1+v$. The equation $L_{g_{\infty}} (v) = \tilde h$ will be solved by the barrier method.


\subsection{The barrier method} 
Let $C_1 > 0$ be the constant from Proposition \ref{prop:eigenvalues} and introduce the constant
\[
\bar\lambda\,=\,\frac 12\, \min\left\{\, C_1\, ,\;\inf_{Z_\infty\setminus Z_3} c_n R_{g_{\infty}}\right\}.
\]

\smallskip\noindent
Note that $\bar\lambda$ is positive, and also that the function $c_n R_{g_\infty}-\bar\lambda$ is positive on $Z_\infty\setminus Z_3$. The two propositions that follow provide two ingredients for the barrier construction.


\begin{proposition}\label{prop:barrier1}
For any point $x_0 \in Z$ there exists a smooth positive function $w: \zp \to \R$ such that
\[
\begin{cases}
\; (L_{g_\infty}-\bar\lambda)(w) = 0& \text{in}\;\; \zp, \;\; \text{and} \\
\; w(x_0)= 1.&
\end{cases}
\]
\end{proposition}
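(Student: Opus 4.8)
The plan is to construct $w$ as a limit of solutions $w_k$ of the corresponding Dirichlet (or better, Neumann) boundary value problems on the exhausting family $Z_k$, normalized at $x_0$, and to use the eigenvalue lower bound of Proposition~\ref{prop:eigenvalues} together with the Harnack inequality to extract a convergent subsequence. More precisely, for each $k$ large enough that $x_0 \in Z_k$, I would first observe that since $\bar\lambda < C_1 \le \lambda(k)$, the operator $L_{g_k} - \bar\lambda$ with Neumann boundary conditions on $\partial Z_k$ is invertible with positive principal eigenvalue $\lambda(k) - \bar\lambda \ge C_1 - \bar\lambda > 0$; in particular it satisfies the maximum principle. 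Using this, one produces a positive solution $w_k$ on $Z_k$ of $(L_{g_k} - \bar\lambda)(w_k) = 0$ with $\partial_\nu w_k = 0$ on $\partial Z_k$ — for instance as the principal eigenfunction of a slightly perturbed problem, or by solving an auxiliary equation with a small positive interior source and letting the source tend to zero — and then rescale so that $w_k(x_0) = 1$. The positivity and the maximum principle are what keep $w_k$ from degenerating.

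The heart of the argument is a uniform Harnack estimate. Since $(L_{g_\infty} - \bar\lambda) w_k = 0$ on $Z_k$ and the coefficients of $L_{g_\infty} - \bar\lambda$ are uniformly controlled on $Z_\infty$ (the metric $g_\infty$ is periodic outside the compact piece $Z_1$, with bounded geometry and bounded scalar curvature), the Harnack inequality applies on each compact piece $W$ with a constant independent of which copy of $W$ one is looking at; chaining these Harnack inequalities along the end gives, for every compact $K \subset Z_\infty$, a constant $C_K$ with $\sup_K w_k \le C_K \inf_K w_k$ for all large $k$. Combined with the normalization $w_k(x_0) = 1$ this yields uniform $C^0$ bounds, hence (by interior elliptic estimates) uniform $C^{2,\alpha}_{\text{loc}}$ bounds, on every compact subset. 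A diagonal/Arzelà--Ascoli argument then extracts a subsequence converging in $C^2_{\text{loc}}$ to a function $w$ on $Z_\infty$; passing to the limit in the equation and the Neumann condition (which disappears in the limit as $\partial Z_k$ escapes to infinity) shows $(L_{g_\infty} - \bar\lambda)(w) = 0$ on all of $Z_\infty$, while $w(x_0) = 1$ and $w \ge 0$ with $w$ not identically zero, so $w > 0$ by the strong maximum principle. Elliptic regularity gives $w \in C^\infty$.

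The main obstacle I anticipate is guaranteeing that the approximating solutions $w_k$ do not degenerate as $k \to \infty$, i.e. that the normalization $w_k(x_0)=1$ is compatible with a nontrivial limit; this is precisely where the strict inequality $\bar\lambda < C_1 \le \lambda(k)$ and the uniform Harnack chain are essential. Without the uniform lower bound on $\lambda(k)$ the solutions could blow up or collapse; without periodicity of $g_\infty$ near infinity the Harnack constants along the end could deteriorate. A secondary technical point is the correct functional-analytic setup on the non-compact manifold — one should work with $C^{2,\alpha}_{\text{loc}}$ convergence rather than an $L^2$ or weighted theory, since at this stage we only want \emph{some} positive solution of the homogeneous equation, not a controlled-growth one (decay will be addressed later, in the construction of the barrier proper). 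I would also remark that an alternative, essentially equivalent route is to define $w$ directly as a suitable limit of rescaled Green's functions $G_k(\cdot, x_0)$ for $L_{g_k} - \bar\lambda$ on $Z_k$; the estimates needed are the same.
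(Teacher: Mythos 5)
Your proposal follows essentially the same route as the paper's proof (which is modeled on Sullivan's argument): exhaust $\zp$ by the compact pieces $Z_k$, use the uniform bound $\bar\lambda < C_1 \le \lambda(k)$ from Proposition~\ref{prop:eigenvalues} to solve a boundary value problem for $L_{g_\infty}-\bar\lambda$ on each $Z_k$ with a positive solution, normalize at $x_0$, and extract a positive smooth limit via a uniform Harnack chain together with Schauder estimates and a diagonal argument. The paper uses Dirichlet rather than Neumann data for the approximants, but this is immaterial since the boundary escapes to infinity, so your argument is correct and matches the paper's.
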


\begin{proof}
It follows from Proposition \ref{prop:eigenvalues} that $0 < \bar\lambda < \lambda(k)$ for all positive $k$,  hence the operators $L_{g_k} - \bar\lambda$ with the Neumann boundary conditions are all invertible modulo constant functions. Since the principal eigenvalue of an elliptic operator with the Dirichlet boundary condition is greater than or equal to the principal eigenvalue of the same operator with the Neumann boundary condition, it follows that $L_{g_k} - \bar\lambda$ with the Dirichlet boundary condition is also invertible. The rest of the proof follows the argument of \cite[Theorem 2.1]{S}. For each positive $k$, consider a smooth solution $w_k'$ of the problem
\[
\begin{cases}
\; (L_{g_\infty}-\bar\lambda)(w_k')=0&\text{ in }Z_k\\
\; w'_k = 0&\text{ on }\partial Z_k
\end{cases}
\]
which is positive in the interior of $Z_k$. Define $w_k = w'_k/w'_k(x_0)$. Then a standard argument using the Harnack inequality and Schauder estimates shows that there is subsequence of $w_k$ which converges to a positive function $w\in C^\infty (\zp)$ on compact subsets of $\zp$ in the $C^{k,\alpha}$ topology. Proposition \ref{prop:barrier1} follows.
\end{proof}

\begin{proposition}\label{prop:barrier2}
There are positive constants $B$ and $C_3$ such that the function $\phi=e^{-Bf}: \zp \to \R$ satisfies the inequality 
\[
L_{g_\infty}(\phi)\; \geq\; C_3\,  \phi\quad \text {on}\quad \zp \setminus Z_3.
\]
\end{proposition}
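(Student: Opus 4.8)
The plan is to compute $L_{g_\infty}(e^{-Bf})$ directly in terms of the first and second derivatives of $f$ and then absorb the error terms into the positive scalar curvature term. First I would recall that on a general Riemannian manifold, for a smooth function $\psi$ and a positive smooth function $\phi$, one has $\Delta_{g}(e^{\psi}) = e^{\psi}(\Delta_g \psi + |\nabla \psi|^2)$. Applying this with $\psi = -Bf$ gives
\[
L_{g_\infty}(e^{-Bf}) \;=\; -\Delta_{g_\infty}(e^{-Bf}) + c_n R_{g_\infty} e^{-Bf} \;=\; e^{-Bf}\Big(B\,\Delta_{g_\infty} f \;-\; B^2 |\nabla f|^2 \;+\; c_n R_{g_\infty}\Big).
\]
So proving the inequality amounts to showing that on $\zp \setminus Z_3$ one has
\[
B\,\Delta_{g_\infty} f \;-\; B^2 |\nabla f|^2 \;+\; c_n R_{g_\infty} \;\geq\; C_3
\]
for an appropriate choice of $B$ and $C_3$.

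The key geometric observation is that on $\zp \setminus Z_3$ the metric $g_\infty$ is \emph{periodic}: it is the lift of the fixed metric $g_i$ (or a bounded modification thereof on finitely many copies of $W$) to the infinite cyclic cover, and $f$ satisfies $f(\tau(x)) = f(x)+1$ with $\tau$ the covering translation. Since $f$ was chosen to differ from $f\circ\tau$ by the constant $1$, both $|\nabla f|^2$ and $\Delta_{g_\infty} f$ are $\tau$-invariant, hence pull back from functions on the compact manifold $X$ (in fact on the compact cobordism $W$). In particular both are bounded: there is a constant $K$ with $|\Delta_{g_\infty} f| \leq K$ and $|\nabla f|^2 \leq K$ everywhere on $\zp \setminus Z_3$. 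Moreover, again by periodicity and uniform positivity of the scalar curvature of $g_i$ away from the transition region, there is a constant $\rho > 0$ with $c_n R_{g_\infty} \geq \rho$ on $\zp \setminus Z_3$. It follows that
\[
B\,\Delta_{g_\infty} f \;-\; B^2 |\nabla f|^2 \;+\; c_n R_{g_\infty} \;\geq\; \rho \;-\; BK \;-\; B^2 K.
\]
Now I would choose $B > 0$ small enough that $BK + B^2 K < \rho/2$, and set $C_3 = \rho/2 > 0$; this gives exactly the claimed inequality $L_{g_\infty}(e^{-Bf}) \geq C_3 e^{-Bf}$ on $\zp \setminus Z_3$.

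The only subtlety I anticipate concerns the region where $g_\infty$ equals $g_i'$ rather than $g_i$, i.e. the copies of $W$ indexed by $j=0$ on each side, which lie inside $Z_3$ anyway by our indexing convention, so they are not part of $\zp \setminus Z_3$; the bound on $R_{g_\infty}$ from below by a positive constant on $\zp \setminus Z_3$ is then immediate since there $g_\infty$ coincides with the genuinely psc metrics $g_0, g_1$ lifted from $X_0, X_1$. One should also be slightly careful that the bound on $|\nabla f|$ and $|\Delta f|$ uses only that $f$ is a fixed smooth function on the compact piece $W$, independent of how far out the end we are; this is exactly the content of the periodicity of the construction. With these observations the argument is a short computation, and the main ``obstacle'' is really just bookkeeping: making sure all the relevant quantities ($|\nabla f|^2$, $\Delta_{g_\infty} f$, and the lower bound on $c_n R_{g_\infty}$) are genuinely uniform over the noncompact end, which follows from $\tau$-invariance.
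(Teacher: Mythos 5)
Your proposal is correct and follows essentially the same route as the paper: expand $L_{g_\infty}(e^{-Bf})$, bound $|\nabla f|^2$ and $|\Delta_{g_\infty} f|$ uniformly (the paper asserts this directly from the construction of $f$, while you justify it via $\tau$-invariance, which is a fine elaboration), use the uniform positive lower bound on $R_{g_\infty}$ off $Z_3$, and take $B$ small. The paper's constant is $C_3=\tfrac12 c_n\min(R_{g_\infty})$, matching your $\rho/2$.
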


\begin{proof}
Let $B > 0$ be an arbitrary constant, to be determined later, and $\phi = e^{-Bf}$. Then
\begin{align*}
\Delta_{g_\infty}\phi\, & = B^2|\nabla f|^2e^{-Bf} - (\Delta_{g_\infty} f)\, Be^{-Bf} \\
& \leq B^2\max(|\nabla f|^2)e^{-Bf} + \max(|\Delta_{g_\infty} f|)\, Be^{-Bf} \\ 
& = B(B\max(|\nabla f|^2)+\max(|\Delta_{g_\infty} f|))\,\phi.
\end{align*}
This allows us to make the estimate
\begin{align}
L_{g_\infty}\phi = & -\Delta_{g_\infty}\phi+c_nR_{g_\infty}\phi \notag \\
& \geq (-B(B\max(|\nabla f|^2)+\max(|\Delta_{g_\infty} f|)) +c_n\min(R_{g_\infty}))\,\phi.\label{eq:barrierest}
\end{align}
From the construction of $f$, we know that $\max(|\nabla f|^2)$ and $\max(\Delta_{g_\infty} f)$ are both finite. Moreover, the minimum of $R_{g_\infty}$ on $\zp \setminus Z_3$ is positive. Combining these facts with inequality (\ref{eq:barrierest}), we conclude that $B > 0$ can be chosen small enough so that
\[
L_{g_\infty}\phi\; \geq\; C_3\,\phi\quad\text{on}\quad \zp \setminus Z_3, 
\]
where $C_3 = \frac 1 2\, c_n\min(R_{g_\infty})$.
\end{proof}

The barrier for the equation $L_{g_\infty}(v) = \tilde h$ is now constructed by piecing together the functions $w$ and $\phi$.

\begin{proposition}\label{prop:barrier3}
There are continuous functions $\overline\phi$, $\underline\phi: \zp \to \R$ which satisfy the inequalities 
\[
L_{g_\infty} (\underline\phi)\; \leq\; \tilde h\quad\text{and}\quad L_{g_\infty}(\overline\phi)\; \geq\; \tilde h
\]
weakly. Moreover, there is a positive constant $C_4$ such that $|\underline\phi|$, $|\overline\phi|\;\leq\; C_4\,\phi$ on $\zp \setminus Z_3$. 
\end{proposition}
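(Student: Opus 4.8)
The plan is to build $\overline\phi$ and $\underline\phi$ separately as piecewise-defined functions that are equal to a multiple of $w$ on the compact region $Z_3$ (where $\tilde h$ lives) and equal to a multiple of the decaying barrier $\phi = e^{-Bf}$ far out on the end, interpolating so that the global distributional inequality holds. Since $\tilde h \geq 0$ and is supported in $Z_3$, for the supersolution $\overline\phi$ I would take $\overline\phi = A\,w$ for a single large constant $A > 0$. On $Z_3$ one has $L_{g_\infty}(\overline\phi) = A(\bar\lambda\, w + \text{nothing}) $ — wait, more precisely $(L_{g_\infty} - \bar\lambda)w = 0$ so $L_{g_\infty}(\overline\phi) = A\bar\lambda\, w$, which is bounded below by a positive constant on the compact set $Z_3$ since $w$ is positive and continuous; hence choosing $A$ large enough makes $L_{g_\infty}(\overline\phi) \geq \sup_{Z_3}\tilde h \geq \tilde h$ on $Z_3$, while on $\zp\setminus Z_3$ we have $\tilde h = 0 \leq A\bar\lambda\,w = L_{g_\infty}(\overline\phi)$ because $\bar\lambda, w > 0$. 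Finally $\overline\phi = A\,w$ is comparable to $\phi$ on the end: by Proposition \ref{prop:barrier1}, $w$ satisfies the same equation modeled on the end-periodic structure, and a Harnack/maximum-principle comparison with the barrier $\phi$ from Proposition \ref{prop:barrier2} gives $w \leq C\phi$ on $\zp\setminus Z_3$ — this is the point where I expect to need a little care (see below).

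For the subsolution $\underline\phi$ I would instead patch. On the end I want something that decays and is a subsolution where $\tilde h = 0$: since $L_{g_\infty}\phi \geq C_3\phi > 0 = \tilde h$ on $\zp\setminus Z_3$ by Proposition \ref{prop:barrier2}, the function $\phi$ itself (or a small multiple $\varepsilon\phi$) is already a supersolution there, not a subsolution, so for a lower barrier I would rather take $\underline\phi = -\varepsilon\phi$ on the end: then $L_{g_\infty}(\underline\phi) = -\varepsilon L_{g_\infty}\phi \leq -\varepsilon C_3\phi \leq 0 = \tilde h$ there. Near $Z_3$ I glue this to a large negative multiple $-A'w$ of $w$ using a minimum: set $\underline\phi = \min\{-\varepsilon\phi,\, -A'w + (\text{const})\}$ or, more robustly, $\underline\phi = -A'w$ on all of $\zp$ with $A'$ large. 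Checking: on $Z_3$, $L_{g_\infty}(\underline\phi) = -A'\bar\lambda w < 0 \leq \tilde h$; on $\zp\setminus Z_3$, $L_{g_\infty}(\underline\phi) = -A'\bar\lambda w < 0 = \tilde h$. And $|\underline\phi| = A'w \leq A'C\phi$ on the end. So in fact, exactly as with $\overline\phi$, a pure multiple of $w$ works — the only subtlety that forces the "piecing together $w$ and $\phi$" language is the decay estimate $w \leq C\phi$ on $\zp\setminus Z_3$, which I address next. With these choices, $\underline\phi = -A'w \leq A w = \overline\phi$ automatically, so the barrier pair is ordered.

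The main obstacle, then, is establishing the exponential decay $w(x) \leq C\,e^{-Bf(x)}$ on $\zp\setminus Z_3$ — i.e. that the solution $w$ of $(L_{g_\infty} - \bar\lambda)w = 0$ produced in Proposition \ref{prop:barrier1} actually decays at the rate of the barrier $\phi$. The strategy is a standard maximum-principle comparison adapted to the two periodic ends: on each end, $\phi = e^{-Bf}$ is a supersolution of $L_{g_\infty} - \bar\lambda$ once $B$ is small (this follows from Proposition \ref{prop:barrier2}, since $L_{g_\infty}\phi \geq C_3\phi$ and $C_3 > \bar\lambda$ after possibly shrinking $B$ further, using $C_3 = \tfrac12 c_n\min R_{g_\infty}$ versus $\bar\lambda = \tfrac12\min\{C_1, \inf c_n R_{g_\infty}\}$ — here one must check the constants line up, shrinking $B$ if necessary so that $C_3 \geq \bar\lambda$, which costs nothing). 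Then on $\zp\setminus Z_3$, the function $C\phi - w$ satisfies $(L_{g_\infty} - \bar\lambda)(C\phi - w) \geq 0$, vanishes in the limit at infinity (both $w$ and $\phi$ tend to $0$, the former because $w \in C^\infty$ is bounded and one applies a Harnack chain along the end to get decay, or because $w$ is obtained as a limit of Dirichlet solutions $w_k$ on $Z_k$ that are controlled), and is $\geq 0$ on $\partial Z_3$ if $C$ is chosen large enough that $C\phi \geq \max_{\partial Z_3} w$ there; the maximum principle for $L_{g_\infty} - \bar\lambda$ (which has positive principal eigenvalue $\bar\lambda$ smaller than $C_1 \leq \lambda(k)$, hence satisfies the maximum principle on each $Z_k$ and in the limit) then forces $C\phi - w \geq 0$ throughout. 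Modulo this comparison argument and the bookkeeping on the constants $B, C, A, A', \varepsilon$, the proposition follows by taking $C_4 = \max\{AC, A'C\}$.
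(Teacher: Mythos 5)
Your construction of the barriers as pure multiples of $w$ is fine \emph{provided} the decay estimate $w \leq C\phi$ on $\zp\setminus Z_3$ holds, and you correctly identify this as the crux. But your argument for that estimate has a genuine gap. The comparison you propose is run on the unbounded region $\zp\setminus Z_3$ and needs control of $C\phi-w$ ``at infinity''; you supply it by asserting that $w$ is bounded and tends to $0$, justified by a ``Harnack chain along the end.'' Neither claim is established: the function $w$ of Proposition \ref{prop:barrier1} is merely a positive solution of $(L_{g_\infty}-\bar\lambda)w=0$ normalized at one point, and on a periodic end such a solution can perfectly well grow exponentially (think of $-u''+(c-\bar\lambda)u=0$ on a half-line, which has both $e^{-\mu t}$ and $e^{\mu t}$ solutions). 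The Harnack inequality only gives two-sided comparability of the values of $w$ within each annulus $Z_{k+1}\setminus Z_k$; it gives no decay, and no bound at all, by itself. So the boundary-at-infinity input to your maximum principle is exactly the statement you are trying to prove, and the argument as written is circular. Your parenthetical fallback --- that $w$ is a limit of the Dirichlet approximants $w_k$ ``that are controlled'' --- is the right germ of a fix: one can compare $w_k$ with $C\phi$ on the \emph{compact} region $Z_k\setminus Z_3$, using $w_k=0$ on $\partial Z_k$, a uniform Harnack bound for $w_k$ on $\partial Z_3$, the supersolution property $L_{g_\infty}\phi\geq C_3\phi\geq\bar\lambda\phi$ there, and positivity of the Dirichlet eigenvalue of $L_{g_\infty}-\bar\lambda$ on $Z_k\setminus Z_3$ (domain monotonicity plus Proposition \ref{prop:eigenvalues}); letting $k\to\infty$ then yields $w\leq C\phi$. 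But you do not carry this out, and without it the proof is incomplete.

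It is worth noting that the paper deliberately avoids proving any unconditional decay for $w$. Its proof proceeds by a dichotomy: if $\beta\phi\leq\alpha w$ on some annulus $Z_{k_0+1}\setminus Z_{k_0}$, it takes $\overline\phi$ to be $\alpha w$ inside, $\beta\phi$ outside, and $\min(\alpha w,\beta\phi)$ in between (the minimum of supersolutions being a weak supersolution), so that the required bound $|\overline\phi|\leq C_4\phi$ holds by fiat on the end; otherwise, every annulus contains a point where $\alpha w<\beta\phi$, and \emph{then} the Harnack inequality does propagate this smallness across each annulus and gives $\alpha w\leq C_4\phi$, so $\overline\phi=\alpha w$ works. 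Your route is viable only after you prove the decay of $w$ (e.g.\ by the $w_k$-comparison sketched above), which is strictly more work than the paper's case analysis; as submitted, the key step is unjustified.
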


\begin{proof}
We start by choosing constants $\alpha$, $\beta>0$ such that $\alpha w < \beta\phi$ on $\partial Z_3$ and both $L_{g_\infty}(\alpha w) \geq \tilde h$ on $\zp$ and $L_{g_\infty}(\beta\phi) \geq \tilde h$ on $\zp \setminus Z_3$. 

{\bf{Case 1:}} There exists a positive integer $k_0\geq4$ such that $\beta\phi\leq \alpha w$ on $Z_{k_0+1}\setminus Z_{k_0}$.
In this case, we define
\[
\overline\phi(x):=
\begin{cases}
\; \alpha w(x)&\text{if $x\in Z_3$}\\
\; \min(\alpha w(x),\beta\phi(x))&\text{if $x\in Z_{k_0}\setminus Z_3$}\\
\; \beta\phi(x)&\text{if $x\in \zp\setminus Z_{k_0}$}
\end{cases}
\]
and let $\underline\phi = -\overline\phi$. The functions $\overline\phi$ and $\underline\phi$ are continuous super- and sub-solutions of the equation $L_{g_\infty}(\cdot) = \tilde h$ on $\zp$. This follows 
from the fact that the minimum (resp. maximum) of two super-solutions (resp. sub-solutions) is again a super-solution (resp. sub-solution); see for instance \cite[Section 2.8]{GT}.

{\bf{Case 2:}} The assumption of Case 1 does not hold. In other words, for every integer $k\geq 4$, there is a point $x_k \in Z_{k+1}\setminus Z_k$ such that $\alpha w(x_k)<\beta\phi(x_k)$ and, in particular, $\alpha w(x_k)<\beta e^{-Bk}$.

Since the the distance between $x_k$ and the boundary components $\partial Z_k$ and $\partial Z_{k+1}$ is less than the diameter of $W$ for all $k$, the Harnack inequality for $w$ provides us with a constant $D > 0$ such that 
\[
\alpha w(x)\leq D\alpha w(x_k)<D\beta e^{-Bk}
\]
for all $x\in Z_{k+1}\setminus Z_{k}$. It now follows that there is a constant $C_4 > 0$ such that $\alpha w \leq C_4 \phi$ on $\zp \setminus Z_4$. In this case, we simply define $\overline\phi=\alpha w$ and $\underline\phi = -\overline\phi$.
\end{proof}

Having constructed the barrier, we can put it to use proving the following existence result.

\begin{proposition}\label{prop:barrier4}
There exists a smooth function $v: \zp \to \R$ such that
\begin{enumerate}
\item $L_{g_\infty}(v) = \tilde h$ on $\zp$, and
\item $\underline\phi\; \leq\, v\,\leq\, \overline\phi$ on $\zp$.
\end{enumerate}
\end{proposition}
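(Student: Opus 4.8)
The plan is to solve the equation $L_{g_\infty}(v) = \tilde h$ on the non-compact manifold $\zp$ by exhaustion, using the barrier $\underline\phi \leq v \leq \overline\phi$ from Proposition~\ref{prop:barrier3} to control the solutions on the compact pieces $Z_k$ and to pass to the limit. First I would recall that $\tilde h = h - c_n R_{g_\infty}$ is smooth, non-negative, and compactly supported in $Z_3$, and that by construction $\underline\phi \leq 0 \leq \overline\phi$ with $\overline\phi$ a super-solution and $\underline\phi = -\overline\phi$ (in both cases of Proposition~\ref{prop:barrier3}) a sub-solution of $L_{g_\infty}(\cdot) = \tilde h$, with $\overline\phi$ bounded (indeed $|\overline\phi| \leq C_4 \phi \to 0$ at infinity outside $Z_3$, and $\overline\phi$ continuous hence bounded on the compact set $Z_3$). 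Since $\bar\lambda > 0$, the operator $L_{g_\infty}$ is, in particular, invertible on each $Z_k$ with Dirichlet boundary conditions (as already noted in the proof of Proposition~\ref{prop:barrier1}, the principal Dirichlet eigenvalue dominates the principal Neumann eigenvalue $\lambda(k) > C_1 > \bar\lambda > 0$, so certainly the principal Dirichlet eigenvalue of $L_{g_k}$ is positive).

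Next I would carry out the exhaustion. For each integer $k \geq 4$, consider the Dirichlet problem $L_{g_{\infty}}(v_k) = \tilde h$ on $Z_k$ with $v_k = 0$ on $\partial Z_k$. Since the principal Dirichlet eigenvalue of $L_{g_k}$ is positive, this problem has a unique smooth solution $v_k$. The key point is to show $\underline\phi \leq v_k \leq \overline\phi$ on $Z_k$. For the upper bound, set $\psi = v_k - \overline\phi$; then $L_{g_\infty}(\psi) \leq \tilde h - \tilde h = 0$ weakly on the interior of $Z_k$, while on $\partial Z_k$ we have $\psi = -\overline\phi \leq 0$ since $\overline\phi \geq 0$. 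The maximum principle for $L_{g_\infty}$ — valid here because the zeroth-order coefficient $c_n R_{g_\infty}$ is non-negative on $\zp \setminus Z_3$ and, more robustly, because the positivity of the principal Dirichlet eigenvalue rules out sign-changing obstructions (one may cite \cite[Section 2.8]{GT} together with the generalized maximum principle for operators with positive principal eigenvalue, e.g. \cite[Theorem 2.1]{S})— then forces $\psi \leq 0$, i.e. $v_k \leq \overline\phi$. The lower bound $v_k \geq \underline\phi$ follows symmetrically, using that $\underline\phi$ is a sub-solution and $\underline\phi \leq 0 = v_k$ on $\partial Z_k$.

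Finally I would extract a convergent subsequence. The functions $v_k$ are uniformly bounded on every fixed compact subset of $\zp$ (by $\max(\|\overline\phi\|_\infty, \|\underline\phi\|_\infty) = \|\overline\phi\|_\infty$, a finite constant independent of $k$, since $\overline\phi$ is bounded on all of $\zp$). Because $L_{g_\infty}(v_k) = \tilde h$ with $\tilde h$ smooth and supported in $Z_3$, interior elliptic (Schauder) estimates give uniform $C^{k,\alpha}$ bounds for $v_k$ on compact subsets of the interior of $Z_m$ for each fixed $m$ once $k > m$; by a diagonal argument there is a subsequence converging in $C^\infty_{\text{loc}}$ to a smooth function $v$ on $\zp$ with $L_{g_\infty}(v) = \tilde h$ and $\underline\phi \leq v \leq \overline\phi$ everywhere. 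This is exactly the assertion of Proposition~\ref{prop:barrier4}.

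The main obstacle I anticipate is justifying the comparison $\underline\phi \leq v_k \leq \overline\phi$ rigorously: $\overline\phi$ and $\underline\phi$ are only continuous (not $C^2$), being defined piecewise as minima/maxima, so the inequality $L_{g_\infty}(\overline\phi) \geq \tilde h$ holds only in the weak (viscosity or distributional) sense, and one must invoke a maximum principle valid for weak super/sub-solutions of $L_{g_\infty}$ whose zeroth-order term is not everywhere non-negative (it can be negative on $Z_3$). The resolution is that $L_{g_\infty}$ has a positive principal Dirichlet eigenvalue on every $Z_k$, which is precisely the hypothesis under which the comparison principle for weak sub/super-solutions remains valid — this is the standard mechanism already used in \cite[Theorem 2.1]{S} and \cite[Proposition 4.6]{CM}, and I would cite those together with \cite[Section 2.8]{GT} rather than reprove it.
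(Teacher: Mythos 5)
Your proposal is correct and follows essentially the same route as the paper: exhaust $\zp$ by the compact pieces $Z_k$, solve the Dirichlet problem for $L_{g_\infty}(\cdot)=\tilde h$ on each $Z_k$, trap the solutions between $\underline\phi$ and $\overline\phi$ by the (generalized) maximum principle, and pass to a $C^{2,\alpha}_{\mathrm{loc}}$ limit via Schauder estimates. The only difference is cosmetic --- you impose boundary data $v_k=0$ where the paper uses $v_k=\underline\phi$, both of which are compatible with the barriers since $\underline\phi\le 0\le\overline\phi$ --- and your explicit attention to the fact that the barriers are only weak super/sub-solutions and that the zeroth-order term may be negative on $Z_3$ is a point the paper's terser proof leaves implicit.
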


\begin{proof}
For each $k\geq3$, we find a solution $v_k: Z_k \to \R$ to the problem
\[
\begin{cases}
\; L_{g_\infty}(v_k) = \tilde h&\text{in $W_k$} \\
\; v_k = \underline\phi&\text{on $\partial W_k$}.
\end{cases}
\]
Since $\underline\phi$ is a subsolution, one can apply the maximum principle to $v_k-\underline\phi|_{Z_k}$ to conclude that $v_k\geq\underline\phi|_{Z_k}$. Likewise, one concludes that $v_k \leq \overline\phi|_{Z_k}$. A standard argument now shows that $v_k$ converges in the $C^{2,\alpha}$ topology to a smooth function $v$ on $\zp$ which satisfies $L_{g_\infty}(v) = \tilde h$.
\end{proof}


\subsection{Finishing the proof}
We are now ready to finish the proof of Lemma \ref{lem:main}. The following proposition ensures that the function $u = 1+v$, where $v$ is the function of Proposition \ref{prop:barrier4}, is positive and hence can serve as a conformal factor.

\begin{proposition}
The function $u = 1+v$ solves the equation $L_{g_\infty}(u) = h$. Moreover, $u > 0$ on $\zp$.
\end{proposition}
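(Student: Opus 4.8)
The plan is to handle the two claims separately, the first being essentially free and the second carrying all the weight. For the identity $L_{g_\infty}(u)=h$, I would just use linearity of $L_{g_\infty}$ together with $L_{g_\infty}(1)=c_nR_{g_\infty}$ and the fact, established in Proposition~\ref{prop:barrier4}, that $L_{g_\infty}(v)=\tilde h=h-c_nR_{g_\infty}$; adding these gives $L_{g_\infty}(u)=L_{g_\infty}(1+v)=c_nR_{g_\infty}+(h-c_nR_{g_\infty})=h$.

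The real content is the positivity $u>0$ on $\zp$. The first thing to notice is that the barrier estimate $\underline\phi\le v\le\overline\phi$ with $|\underline\phi|,|\overline\phi|\le C_4 e^{-Bf}$ only controls $v$ on $\zp\setminus Z_3$, where it yields $u=1+v\ge 1-C_4 e^{-Bf}$, so that $u$ is close to $1$ — hence positive — far out the ends. The difficulty is concentrated on the compact core $Z_3$, where the scalar curvature of $g_\infty$ may be negative and no pointwise bound keeps $|v|<1$; there positivity has to come from a maximum principle, and the obstacle is precisely that the zeroth-order coefficient $c_nR_{g_\infty}$ of $L_{g_\infty}$ changes sign, so the bare maximum principle is unavailable. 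I would circumvent this using the positive function $w$ from Proposition~\ref{prop:barrier1}: since $L_{g_\infty}(w)=\bar\lambda w$ with $\bar\lambda>0$ and $w>0$ everywhere, $w$ is a positive supersolution, and conjugating $L_{g_\infty}$ by $w$ produces an elliptic operator whose zeroth-order term is the \emph{positive} constant $\bar\lambda$, for which the maximum principle does hold. (This is just the positivity of the principal Dirichlet eigenvalue of $L_{g_\infty}$ on the compact pieces $Z_k$, which dominates the Neumann eigenvalues $\lambda(k)\ge C_1$ of Proposition~\ref{prop:eigenvalues}.)

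Concretely, I would carry this out on the exhausting submanifolds. Recall from the proof of Proposition~\ref{prop:barrier4} that $u=\lim_{k\to\infty}u_k$ in $C^{2,\alpha}_{\mathrm{loc}}$, where $u_k:=1+v_k$ solves $L_{g_\infty}(u_k)=h$ on $Z_k$ with $u_k=1+\underline\phi$ on $\partial Z_k$. Because $\partial Z_k\subset\zp\setminus Z_3$ with $f\to\infty$ there as $k\to\infty$, the bound $|\underline\phi|\le C_4 e^{-Bf}$ from Proposition~\ref{prop:barrier3} gives $u_k\ge\tfrac12>0$ on $\partial Z_k$ for all sufficiently large $k$. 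Fixing such a $k$ and writing $u_k=w\,\psi_k$ on $Z_k$, a standard computation using $L_{g_\infty}(w)=\bar\lambda w$ converts $L_{g_\infty}(u_k)=h$ into
\[
-\Delta_{g_\infty}\psi_k\;-\;2\,w^{-1}\langle\nabla w,\nabla\psi_k\rangle\;+\;\bar\lambda\,\psi_k\;=\;w^{-1}h\qquad\text{in } Z_k .
\]
The left side is elliptic with positive zeroth-order coefficient $\bar\lambda$, the right side is positive, and $\psi_k=u_k/w>0$ on $\partial Z_k$; so the maximum principle (in its strong form) forces $\psi_k>0$, hence $u_k=w\,\psi_k>0$, throughout $Z_k$.

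Passing to the limit $k\to\infty$ then gives $u\ge 0$ on $\zp$, and a final application of the strong maximum principle upgrades this to $u>0$: if $u(p)=0$ for some $p$, then $p$ is an interior minimum, so $\Delta_{g_\infty}u(p)\ge 0$ and therefore $h(p)=L_{g_\infty}(u)(p)=-\Delta_{g_\infty}u(p)\le 0$, contradicting $h>0$. I expect the only genuine obstacle to be the one already flagged — producing a usable maximum principle for $L_{g_\infty}$ over the region $Z_3$ of possibly negative scalar curvature — and the role of the supersolution $w$ (equivalently, of the uniform eigenvalue lower bound $C_1$ from Proposition~\ref{prop:eigenvalues}) is exactly to supply it.
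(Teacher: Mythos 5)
Your proof is correct, and it reaches the positivity of $u$ by a route that is close in spirit to the paper's but different in execution. Both arguments turn on the same key idea you identified: since $c_nR_{g_\infty}$ changes sign on $Z_3$, one must conjugate $L_{g_\infty}$ by a positive solution of an eigenvalue-type problem so that the zeroth-order coefficient becomes a positive constant, after which the maximum principle applies; your computation of the conjugated operator is the standard one and checks out. The difference lies in the choice of conjugating function and in where boundary positivity comes from. The paper works directly with the limit function $u$ on a single large compact piece $Z_k$ (chosen so that $u>0$ outside $Z_k$ by the decay of $v$) and divides by the principal \emph{Dirichlet} eigenfunction $\phi_0$ of $L_{g_k}$; because $\phi_0$ vanishes on $\partial Z_k$ while $u$ is near $1$ there, the quotient $u/\phi_0$ must attain its minimum in the interior, and the conjugated equation with right-hand side $-h/\phi_0<0$ and positive coefficient $\lambda^D(k)$ forces that minimum value to be positive. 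You instead conjugate by the globally positive function $w$ of Proposition \ref{prop:barrier1}, which satisfies $L_{g_\infty}w=\bar\lambda w$ with $\bar\lambda>0$, apply the maximum principle to the approximants $u_k=1+v_k$ on $Z_k$ --- whose boundary values $1+\underline\phi$ are bounded below by $\tfrac12$ for large $k$ by Proposition \ref{prop:barrier3} --- and then pass to the limit and upgrade $u\ge 0$ to $u>0$ by evaluating $L_{g_\infty}u=h>0$ at a putative interior zero (this last step is really just the second-derivative test at an interior minimum rather than the strong maximum principle, but it is valid since the term $c_nR_{g_\infty}u$ vanishes at a zero of $u$ regardless of the sign of $R_{g_\infty}$). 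Your version reuses a function already in hand and avoids introducing $\phi_0$ and the comparison $\lambda^D(k)\ge\lambda(k)\ge C_1$, at the cost of an extra limiting step; the paper's version avoids the limit but needs the Dirichlet eigenfunction. Both are complete proofs.
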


\begin{proof}
The following argument is reproduced from the proof of \cite[Proposition 4.6]{CM}. From the exponential decay of the function $v$ in Proposition \ref{prop:barrier4} it is clear that $u > 0$ on $\zp\setminus Z_k$
for some sufficiently large $k > 3$. Let $\phi_0 > 0$ be a positive eigenfunction corresponding to the principal eigenvalue $\lambda^D(k)$ of the operator $L_{g_{k}}$ on $Z_k$ with the Dirichlet boundary condition, that is, 
\[
\begin{cases}
\; L_{g_k}(\phi_0) = \lambda^D(k)\,\phi_0&\text{in $Z_k$} \\
\; \phi_0 = 0&\text{on $\partial Z_k$}.
\end{cases}
\]
Then one may consider the function $u/\phi_0$ and calculate as in \cite[Proposition 4.6]{CM} that
\[
\Delta_{g_k}\left(\frac{u}{\phi_0}\right) + \frac{2}{\phi_0}\left\langle\nabla\phi_0,\nabla\left(\frac{u}{\phi_0}\right)\right\rangle - \lambda^D(k)\left(\frac{u}{\phi_0}\right) = - \frac{h}{\phi_0}<0.
\]
It follows that at an interior minimum in $Z_k$, the function $u/\phi_0$ is positive; the minimum of $u/\phi_0$ must lie in the interior because $\phi_0 = 0$ on $\partial Z_k$.
\end{proof}

It follows from formula \eqref{eq:conformalchange} that the metric $u^{4/(n-2)} g_\infty$ has positive scalar curvature. Since $u - 1 = v$ decays exponentially by Proposition \ref{prop:barrier4}, the proof of Lemma \ref{lem:main} is complete.


\section{Proof of Theorem \ref{T:one}}\label{S:reps}
We will prove Theorem \ref{T:one} by constructing connected psc manifolds $M$ with fundamental group $\Gamma$ and representations $\alpha$ so that the invariants $\tw_\alpha$ take on infinitely many distinct values. Note that the connectedness of $M$ is an essential part of Theorem \ref{T:one}; disconnected examples could be constructed much more easily. 

In dimension six, it follows from \cite[Theorem 0.1]{botvinnik-gilkey:spaceforms} that, for each finite group $G$ with $r_5(G) > 0$, there are closed connected spin manifolds $Y^5$ which admit infinitely many psc metrics $g_i$ distinguished up to bordism by invariants $\tw_\alpha (Y,g_i)$, where $\alpha$ is a unitary representation of $G$. By Lemma~\ref{L:pi1} we may assume that $\pi_1(Y) \cong G$. The pull back via projection $\Z \times G \to G$ gives rise to a unitary representation of $\Z \times G$ called again $\alpha$. Since the periodic $\eta$-invariants of~\cite{MRS3} satisfy $\eta (S^1 \times Y,\D^+_{\alpha}) = \eta_{\alpha}(Y)$ when $S^1 \times Y$ is given a product metric, Theorem~\ref{T:two} implies that the periodic invariants $\tw_\alpha(S^1 \times Y,dt^2 + g_i)$ are all distinct up to bordism. Note that this argument gives the {\em a priori} stronger conclusion that all of these non-bordant metrics live on the same manifold.

This argument does not work in dimension $4$, because $3$-dimensional space forms support a unique psc metric, up to isotopy~\cite{marques:psc}. One issue is that the standard technique for pushing a psc metric across a cobordism, which is crucial to the constructions in~\cite{botvinnik-gilkey:bordism,botvinnik-gilkey:spaceforms}, does not work for pushing a metric across a $4$-dimensional cobordism.  In general, one is not able to push a psc metric across a $5$-dimensional cobordism either.  However, Section 9.3 in~\cite{MRS3}  shows how to create psc-cobordisms between $4$-manifolds, at the expense of taking connected sums with some unknown number of copies of $S^2 \times S^2$.  

For a non-simply connected space form $S^3/G$, the proof of~\cite[Theorem 9.5]{MRS3}, sketched below, shows that for an appropriate $\alpha$ and any $N\geq 1$, there is a non-negative number $m_N$ such that 
$$
\tw_\alpha\left((S^1 \times (S^3/G)) \conn m_N \cdot (S^2 \times S^2)\right)
$$
takes on $N$ different values. By letting $N$ go to infinity, we see that $\Omega^{\spin,+}_{4}(S^1 \times BG)$ must be infinite. We note that, in contrast to the $6$-dimensional result, it is not clear if infinitely many non-bordant metrics could be supported on the same manifold.\qed

\begin{remark}
The hypothesis in the $6$-dimensional case that $Y$ admit a spin structure can presumably be omitted. This would involve extending the results of~\cite{MRS3} to include `twisted' spin structures as discussed in the introduction of~\cite{botvinnik-gilkey:spaceforms}. \\
\end{remark}

\begin{remark}
Because the metrics in the $6$-dimensional case are product metrics, we do not really need the end-periodic index theorem to prove Theorem \ref{T:one} in this case; the Atiyah--Patodi--Singer theorem~\cite{aps:II} (as extended in Proposition \ref{P:two}) would suffice. On the other hand, it does not seem possible to prove the $4$-dimensional case of Theorem \ref{T:one} without the use of~\cite{MRS3}.
\end{remark}

For the sake of completeness, we provide here a sketch of the construction of the metrics on the $4$-manifolds described in the proof above. Details can be found in~\cite[Section 9]{MRS3}

Write $Y= S^3/G$, and let $g$ be a psc metric descending from the round metric on $S^3$. Lemma 9.7 of~\cite{MRS3} provides a representation $\alpha: \pi_1(Y) \to U(k)$ for which the invariant $\tw_\alpha(Y,g)$ is non-zero. For any $n$,  the finiteness of the spin cobordism group $\Omega^{\spin}_3(B\pi_1(Y))$ and some additional topological arguments are used construct a spin cobordism $V_n$ with $\pi_1(V_n) \cong \pi_1(Y)$. There is an extension $\tilde{\alpha}$ of the representation $\alpha$ so that 
$$
\p\, (V_n,\tilde\alpha)\; =\; (Y,\alpha)\; -\; (nd +1) \cdot (Y,\alpha)
$$
Next, cross $V_n$ with a circle to obtain a cobordism $W_n$ from $S^1 \times  ((nd +1) \cdot  Y)$
(the lower part of the boundary) to $S^1 \times Y$. It can be assumed that this cobordism has a handlebody decomposition with handles of index only $2$ and $3$. Write $k_n$ for the number of $3$-handles.

The argument of~\cite{botvinnik-gilkey:bordism,botvinnik-gilkey:spaceforms} would at this point be to push the psc metric from the bottom end of $W_n$ to $S^1 \times Y$, but the presence of the $3$-handles  prohibits this, since they are attached along spheres of codimension $2$. However, the psc metric can be pushed across 
$$
S^1 \times  ( (nd +1) \cdot Y) \times I
$$
plus the $2$-handles. The upper boundary of this manifold is shown to be $(S^1 \times Y) \#\, k_n \cdot  (S^2 \times S^2)$, which therefore acquires a psc metric. By construction, all of these manifolds have fundamental group $\Z \times G$, and their $\tw_\alpha$ invariants grow linearly with $n$ and hence by Theorem~\ref{T:two}, their bordism classes are distinct.  By adding additional copies of $S^2 \times S^2$ to these manifolds, one can obtain diffeomorphic manifolds carrying an arbitrary number of these distinct bordism classes.\qed



\begin{thebibliography}{10} 

\bibitem{AB}
K.~Akutagawa, B.~Botvinnik,
\emph{Manifolds of positive scalar curvature and conformal cobordism theory}, Math. Ann. \textbf{324} (2002), 817--840

\bibitem{aps:II}
M.~Atiyah, V.~Patodi, and I.~Singer,  {\em Spectral asymmetry and {Riemannian} geometry.~{II}}, Math. Proc. Camb. Phil. Soc. {\bf 78} (1975), 405--432

\bibitem{BK}
B.~Botvinnik, D.~Kazaras, 
\emph{Minimal hypersurfaces and bordism of positive scalar curvature metrics}, Math. Ann. \textbf{371} (2018), 189--224


\bibitem{botvinnik-gilkey:bordism}
B.~Botvinnik and P.~B. Gilkey, {\em Metrics of positive scalar curvature on spherical space forms}, Canad. J. Math. {\bf 48} (1996), 64--80

\bibitem{botvinnik-gilkey:spaceforms}
\leavevmode\vrule height 2pt depth -1.6pt width 23pt, {\em The eta invariant and metrics of positive
  scalar curvature}, Math. Ann. {\bf 302} (1995), 507--517
  
\bibitem{CM}
P.~Chru{\'s}ciel, R.~Mazzeo,  
\emph{Initial Data Sets with Ends of Cylindrical Type: I. The Lichnerowicz Equation}, Ann. Henri Poincar{\'e} \textbf{16} (2015), 1231--1266
 
\bibitem{E3} 
J.~F.~Escobar,
\emph{Conformal deformation of a Riemannian metric to a constant scalar curvature metric with constant mean curvature on the boundary}, Indiana Univ. Math. J. \textbf{45} (1996), 917--943

\bibitem{E1} 
J.~F.~Escobar,
\emph{Conformal deformation of a Riemannian metric to a scalar flat metric with constant mean curvature on the boundary},  Ann. of Math. \textbf{136} (1992), 1--50

\bibitem{E2b} 
J.~F.~Escobar,
\emph{ Addendum: ``Conformal deformation of a {R}iemannian metric to a scalar flat metric with constant mean curvature on the boundary'', Ann. Math. \textbf{136} (1992), 1--50}, Ann. of Math. \textbf{139} (1994), 749--750



\bibitem{gajer:cobordism}
P.~Gajer, {\em Riemannian metrics of positive scalar curvature on compact
  manifolds with boundary}, Ann. Global Anal. Geom., {\bf 5} (1987), 179--191.


\bibitem{GT}
D.~Gilbarg, N.~S.~Trudinger, Elliptic Partial Differential Equations of Second Order, Classics in Mathematics, Reprint of the 1998 edition, Springer-Verlag, Berlin (2001).

\bibitem{gromov-lawson:psc}
M.~Gromov, H.~B. Lawson, Jr., {\em The classification of simply connected manifolds of positive scalar curvature}, Ann. of Math. \textbf{111} (1980), 423--434

\bibitem{hitchin:spinors}
N.~Hitchin, {\em Harmonic spinors}, Adv. in Math. {\bf 14} (1974), 1--55

\bibitem{LeichtnamPiazza:eta}
E.~Leichtnam, P.~Piazza, {\em On higher eta-invariants and metrics of positive scalar curvature}, $K$-Theory {\bf 24} (2001), 341--359

\bibitem{lichnerowicz:spinors}
A.~Lichnerowicz, {\em Spineurs harmoniques}, C. R. Acad. Sci. Paris {\bf 257} (1963), 7--9

\bibitem{lohkamp:cones}
J.~Lohkamp, {\em Minimal Smoothings of Area Minimizing Cones}, 2018.
\newblock https://arxiv.org/abs/1810.03157.

\bibitem{Lott:eta}
J.~Lott, {\em Higher eta-invariants}, $K$-Theory {\bf 6} (1992), 191--233

%

\bibitem{marques:psc} 
F.~C. Marques, {\em Deforming three-manifolds with positive scalar curvature}, Ann. of Math. {\bf 176} (2012), 815--863

\bibitem{MRS3}
T.~Mrowka, D.~Ruberman, N.~Saveliev,
\emph{An index theorem for end-periodic operators}, Compositio Math. \textbf{152} (2016), 399--444

\bibitem{rosenberg:psc-progress}
J.~Rosenberg,  {\em Manifolds of positive scalar curvature: a progress report}, in ``Surveys in differential geometry. {V}ol. {XI}'', vol.~11 of Surv. Differ. Geom., Int. Press, Somerville, MA, 2007, 259--294


\bibitem{SY} 
R.~Schoen, S.-T.~Yau, 
Lectures on Differential Geometry, Conference Proceedings and Lecture Notes in Geometry and Topology I, International Press, 1994.

\bibitem{SY79} 
\leavevmode\vrule height 2pt depth -1.6pt width 23pt,
\emph{On the structure of manifolds with positive scalar curvature}, Manuscripta Math. {\bf 28} (1979), 159--183

\bibitem{schoen-yau:higherdim}
\leavevmode\vrule height 2pt depth -1.6pt width 23pt, {\em Positive scalar
  curvature and minimal hypersurface singularities}, 2017.
\newblock https://arxiv.org/abs/1704.05490.


\bibitem{schick:psc-survey}
T.~Schick, 
{\em The topology of positive scalar curvature}, in ``Proceedings of the {I}nternational {C}ongress of 
{M}athematicians---{S}eoul 2014. {V}ol. {II}'', Kyung Moon Sa, Seoul, 2014, 1285--1307

\bibitem{S}
D.~Sullivan, \emph{Related aspects of positivity in Riemannian geometry}, J. Differential Geom. \textbf{25} (1987), 327--351
  

\end{thebibliography}
\end{document}